\title{Lyapunov stability
of differential inclusions with Lipschitz Cusco perturbations of maximal
monotone operators}
\newtheorem{theorem}{Theorem}[section]
\newtheorem{corollary}[theorem]{Corollary}
\newtheorem{lemma}[theorem]{Lemma}
\newtheorem{proposition}[theorem]{Proposition}
\newtheorem{definition}[theorem]{Definition}
\def\beq{\begin{equation}}
\def\eeq{\end{equation}}
\def\baq{\begin{eqnarray}}
\def\eaq{\end{eqnarray}}
\def\baqn{\begin{eqnarray*}}
\def\eaqn{\end{eqnarray*}}
\def\image #1 (#2,#3) (echelle #4) #5{
\dimen2=#2
\dimen3=#3
\divide \dimen2 by 1000
\multiply \dimen2 by #4
\divide \dimen3 by 1000
\multiply \dimen3 by #4
\setbox1 =\vbox to \dimen2{\hsize=\dimen3\vfill\special{picture #1
scaled #4}}
\vbox{\hsize=\dimen3\box1\medskip\centerline{#5}}
}
\begin{document}
\author{
Samir {\sc Adly}\thanks{Laboratoire XLIM, Universit\'e de Limoges, France. Email: samir.adly@unilim.fr.}, Abderrahim \textsc{Hantoute}\thanks{Center for Mathematical Modeling (CMM), Universidad de Chile. Email: ahantoute@dim.uchile.cl.}, Bao Tran \textsc{Nguyen}\thanks{Universidad de O'Higgins, Rancagua, Chile. Email: nguyenbaotran31@gmail.com }
}

\maketitle
\begin{abstract}\noindent We give criteria for weak and strong invariant closed sets for differential
inclusions given in $\mathbb{R}^{n}$ and governed by Lipschitz Cusco
perturbations of maximal monotone operators. Correspondingly, we provide
different characterizations for the\ associated strong Lyapunov functions.
The resulting conditions only depend on the data of the system.\end{abstract}

\noindent {\bf Keywords} Differential inclusions, Cusco mappings, Maximal monotone
operators, $a$-Lyapunov pairs, Invariant sets. \\
\noindent {\bf AMS subject classifications} 37B25 - 47J35  - 93B05.
\setcounter{tocdepth}{1}
\tableofcontents


\section{Introduction\label{sec1}}

In this paper we investigate\ (weak and strong) invariant closed sets with
respect to the following differential inclusion, given in $\mathbb{R}^{n}$, 
\begin{equation}
\dot{x}(t)\in F(x(t))-A(x(t))\,\,\,\mbox{\ a.e.\ }t\geq 0,\text{ }%
x(0)=x_{0}\in \overline{\text{dom}A},  \label{ieq}
\end{equation}%
where $F:\mathbb{R}^{n}\rightrightarrows \mathbb{R}^{n}$ is a Lipschitz
Cusco multifunction; that is, a Lipschitz set-valued mapping with nonempty,
convex and compact values, and $A:\mathbb{R}^{n}\rightrightarrows \mathbb{R}%
^{n}$ is a maximal monotone operator. There is no restriction on the initial
condition $x_{0}$ that can be any point in the closure of the domain of $A,$
possibly not a point of definition of $A.$ Equivalently, we also
characterize (strong) Lyapunov functions and, more generally, $a$-Lyapunov
pairs associated to the differential inclusion above. Our criteria are given
by means only of the data of the system, represented by\ the multifunction $F
$ and the operator $A,$ together with first-order approximations of the
invariant sets candidates, using Bouligand tangent cones, or, equivalently,
Fr\'{e}chet or proximal normal cones, and first-order (general) derivatives
of Lyapunov functions candidates, using directional derivatives, Fr\'{e}chet
or proximal subdifferentials.\ 

Our analysis aims at gathering in one framework two different kinds of
dynamic systems that were studied separately in the literature, at least in
what concerns Lyapunov stability. The first kind of these dynamic systems is
governed exclusively by Cusco multifunctions, and gives rise to a natural
extension of the classical differential equations, given in the form 
\begin{equation}
\dot{x}(t)\in F(x(t))\,\,\,\mbox{\ a.e.\ }t\geq 0,\text{ }x(0)=x_{0}\in 
\mathbb{R}^{n}.  \label{ieq'.}
\end{equation}%
The consideration of differential inclusions rather than differential
equations allows more useful existence theorems, as revealed by\ Filippov's
theory for differential equations with discontinuous right-hand-sides \cite%
{F}. Stability of such systems; namely, the study of Lyapunov functions and
invariant sets, has been extensively studied and investigated especially
during the nineties by many authors; see, for example, \cite{CLSW, CLR, DRW}%
, as well as \cite{AF, A, FP} (see, also, the references therein). For
instance, complete characterizations for closed sets can be found in \cite%
{CLSW} in the finite-dimensional setting, and in \cite{CLR} for Hilbert
spaces. It is worth recalling that only the upper semicontinuity of the
Cusco mapping $F$ is required for the weak invariance, while Lipschitzianity
is used for the strong invariance (see \cite{CLR}). Invariance
characterizations of a same nature have been done in \cite{DRW} for\
one-side Lipschitz (not necessary Lipschitz) and compact-valued
multifunctions. These results have been adapted in \cite{CP2} to the
following more general differential inclusion (for $T\in \lbrack 0,+\infty ]$%
)%
\begin{equation}
\dot{x}(t)\in F(t,x(t))-\mathrm{N}_{C(t)}(x(t))\,\,\,\text{a.e.}\,\,t\in
\lbrack 0,T],\,x(0)=x_{0}\in C(0),  \label{ieq''}
\end{equation}%
where $C(t)$ is a uniformly prox-regular sets in $\mathbb{R}^{n}$ and $%
\mathrm{N}_{C(t)}$ is the associated normal cone. Observe here that the
right-hand-side may be unbounded, but, however, in the case when $T<+\infty $%
, the last differential inclusion above is equivalent to the following one,
for some positive constant $M>0,$ 
\begin{equation*}
\dot{x}(t)\in F(t,x(t))-\mathrm{N}_{C}(t)(x(t))\cap \mathrm{B}(\theta
,M)\,\,\,\text{a.e.}\,\,t\in \lbrack 0,T],x(0)=x_{0}\in C(0),
\end{equation*}%
giving rise to a differential inclusion in the form of (\ref{ieq'.}).

The other kind of systems that is covered by (\ref{ieq}) concerns
differential inclusions governed by maximal monotone operators, or, more
generally, (single-valued) Lipschitz perturbations of these operators, that
we write as 
\begin{equation}
\dot{x}(t)\in f(x(t))-A(x(t))\,\,\,\mbox{\ a.e.\ }t\geq 0,\text{ }%
x(0)=x_{0}\in \overline{\text{dom}A}.  \label{mono}
\end{equation}%
This system can be seen as perturbations of the ordinary differential
equation $\dot{x}(t)=f(x(t)),$ where $A$ could represent some associated
control action. In this single-valued Lipschitzian setting, weak and strong
invariance coincide since differential inclusion (\ref{ieq}) possesses
unique solutions. Compared to (\ref{ieq'.}) the right-hand-side in this
differential inclusion can be unbounded, or even empty. Typical examples of (%
\ref{mono}) involve the Fenchel subdifferential of proper, lower
semicontinuous convex functions (\cite{AG}). System (\ref{mono})\ has been
extensively studied; namely, regarding existence, regularity and properties
of the solutions \cite{Br}, while\ Lyapunov stability of such systems have
been initiated in \cite{P}; see, also, \cite{AHB2, AHT1, AHT2} for recent
contributions on the subject. Different criteria using the semi-group
generated by the operator $A$ can also be found in\ \cite{KS}, where
Lyapunov functions are characterized as viscosity-type solutions of
Hamilton-Jacobi equations, and\ in \cite{CM}, using implicit tangent cones
associated to the invariant sets candidates.

It is worth observing that (\ref{ieq}) is a special case of the following
more general differential inclusion 
\begin{equation}
\dot{x}(t)\in F(t,x(t))-A(t)(x(t))\,\,\,\mbox{\ a.e.\ }t\geq 0,\text{ }%
x(0)=x_{0}\in \overline{\text{dom}A(0,\cdot )},  \label{ieq'}
\end{equation}%
where $A$ and $F$ are also allowed to move in an appropriate way with
respect to the time variable, and satisfying some natural continuity and
measurability conditions. Existence of solution of (\ref{ieq'}) have been
also studied in \cite{AS,LM, SY} among others. In particular, in the Hilbert
spaces setting, \cite{AS} considered similar systems as the one in (\ref{ieq}%
), but with requiring\ strong assumptions on the multifunction $F.$ In \cite%
{SY} the authors assume that $F$ is a single-valued mapping, that is
Lipschitz with respect to the second variable, while the minimal section
mapping of the maximal monotone operators $A(t)$ is uniformly bounded.

In this paper, we study and characterize strong and weak invariant closed
subsets of the closure of the domain of $A,$ dom$A,$ with respect to
differential inclusion (\ref{ieq}). We shall assume in our analysis that the
invariant sets candidates $S\subset \mathbb{R}^{n}$ satisfy the following
condition 
\begin{equation}
\Pi _{S}(x)\subset S\cap \text{dom$A$}\,\,\,\forall x\in \text{dom$A$},
\label{cond.}
\end{equation}%
where $\Pi _{S}$ refers to the projection operator on $S.$ This condition
has been used in many works; see, for instance, \cite{BP}, where the author
is concerned with flow invariance characterizations for differential
equations, with right-hand-sides given by nonlinear semigroup generators in
the sense of Crandall- Liggett (see \cite{CL}). It is clear that condition (%
\ref{cond.}) holds whenever $S\subset \text{dom$A$}$. When dealing with weak
invariant closed sets, we shall require some usual\ boundedness conditions\
on the invariant set, relying on\ the minimal norm section of the maximal
monotone operator $A.$ 

The paper is organized as follows: After Section 2, reserved to give the
necessary notations and present the main tools, we make in Section 3 a
review of the existence theorems of differential inclusion (\ref{ieq}), and
establish some first properties of the solutions. In Section 4 we
characterize weak and strong invariant closed sets with respect to (\ref{ieq}%
), while in Section 5, criteria for strong Lyapunov pairs are provided. 

\section{Notation and main tools}

In this paper, the notations $\langle \cdot ,\cdot \rangle $ and $\left\Vert
\cdot \right\Vert $ are the inner product and the norm in $\mathbb{R}^{n}$,
respectively. For each $x\in \mathbb{R}^{n}$ and $\rho \geq 0,$ $\mathrm{B}%
(x,\rho )$ is the closed ball with center $x$ and radius $\rho $; in
particular, we denote $\mathrm{B}_{r}:=\mathrm{B}(\theta ,r)$ where $\theta $
is the origin vector\ in $\mathbb{R}^{n}$. Given a nonempty set $S\subset 
\mathbb{R}^{n}$, we denote by $\overline{S}$ and int$(S)$ the closure and
the interior of $S,$ respectively. We denote by $\left\Vert S\right\Vert $
the nonnegative real number define by 
\begin{equation*}
\left\Vert S\right\Vert :=\sup \{\left\Vert v\right\Vert :\,\,v\in S\}.
\end{equation*}%
The \textit{orthogonal projection mapping} onto $S$ is defined as 
\begin{equation*}
\Pi _{S}(x):=\{s\in S:\,\,\left\Vert x-s\right\Vert =d_{S}(x)\},
\end{equation*}%
where $d_{S}(x):=\inf \{\left\Vert x-s\right\Vert ,s\in S\}$ is the \textit{%
distance function} to $S$. If $S$ is a closed set, then $\Pi _{S}(x)\neq
\emptyset $ for every $x\in \mathbb{R}^{n}$. We denote by $S^{0}:=\Pi
_{S}(\theta )$ the minimal norm vector in $S$. The \textit{indicator function%
} of $S$ is defined as 
\begin{equation*}
\mathrm{I}_{S}(x):=%
\begin{cases}
0 & \text{if}\,\,x\in S \\ 
+\infty & \text{if}\,\,x\notin S,%
\end{cases}%
\end{equation*}%
and the \textit{support function} of $S$ is defined as 
\begin{equation*}
\sigma _{S}(x):=\sup \{\langle x,s\rangle :s\in S\},
\end{equation*}%
with the convention that $\sigma _{\emptyset }=-\infty .$ Given a function $%
\varphi :\mathbb{R}^{n}\rightarrow \mathbb{R}\cup \{+\infty \}$, its \textit{%
domain} and \textit{epigraph} are defined by 
\begin{equation*}
\begin{split}
& \text{dom}\varphi :=\{x\in \mathbb{R}^{n}:\varphi (x)<+\infty \}; \\
& \text{epi}\varphi :=\{(x,\alpha )\in \mathbb{R}^{n+1}:\varphi (x)\leq
\alpha \}.
\end{split}%
\end{equation*}%
We say $\varphi $ is proper if $\text{dom}\varphi \neq \emptyset $; lower
semicontinuous (lsc for short), if $\text{epi}\varphi $ is closed. We denote
by\ $\mathcal{F}(\mathbb{R}^{n})$ the set all proper and lsc functions.%
\newline
Next, we introduce some basic concepts of nonsmooth and variational
analysis. Let $\varphi \in \mathcal{F}(\mathbb{R}^{n})$ and $x\in \text{dom}%
\varphi .$ We call $\xi \in \mathbb{R}^{n}$ a \textit{proximal subgradient}
of $\varphi $ at $x$, written $\xi \in \partial _{P}\varphi (x),$ if 
\begin{equation*}
\underset{y\rightarrow x,y\neq x}{\lim \inf }\frac{\varphi (y)-\varphi
(x)-\langle \xi ,y-x\rangle }{\left\Vert y-x\right\Vert ^{2}}>-\infty .
\end{equation*}%
A vector $\xi \in \mathbb{R}^{n}$ is said to be a \textit{Fr\'echet 
subgradient} of $\varphi $ at $x$, written $\xi \in \partial _{P}\varphi (x)$
if 
\begin{equation*}
\underset{y\rightarrow x,y\neq x}{\lim \inf }\frac{\varphi (y)-\varphi
(x)-\langle \xi ,y-x\rangle }{\left\Vert y-x\right\Vert }\geq 0.
\end{equation*}%
The \textit{limiting subdifferential} of $\varphi $ at $x$ is defined as 
\begin{equation*}
\partial _{L}\varphi (x):=\{\underset{n\rightarrow \infty }{\lim }\xi
_{n}\,\,|\,\,\xi _{n}\in \partial _{P}\varphi (x_{n}),x_{n}\rightarrow
x,\varphi (x_{n})\rightarrow \varphi (x)\},
\end{equation*}%
and the singular subdifferential of $\varphi $ at $x$ as 
\begin{equation*}
\partial _{\infty }\varphi (x):=\{\underset{n\rightarrow \infty }{\lim }%
\alpha _{n}\xi _{n}\,\,|\,\,\xi _{n}\in \partial _{P}\varphi
(x_{n}),x_{n}\rightarrow x,f(x_{n})\rightarrow f(x),\alpha _{n}\downarrow
0\}.
\end{equation*}%
The \textit{Clarke subdifferential} of $\varphi $ at $x$ is 
\begin{equation*}
\partial _{C}\varphi (x):=\overline{\text{co}}\{\partial _{L}\varphi
(x)+\partial _{\infty }\varphi (x)\}.
\end{equation*}%
In the case $x\notin \text{dom}\varphi $, by convention we set $\partial
_{P}\varphi (x)=\partial _{F}\varphi (x)=\partial _{L}\varphi (x)=\emptyset
. $ We have the classical inclusions $\partial _{P}\varphi (x)\subset
\partial _{F}\varphi (x)\subset \partial _{L}\varphi (x).$ If $\varphi $ is
locally Lipschitz around $x$, then $\partial _{\infty }\varphi (x)=\{\theta
\}$ and 
\begin{equation*}
\partial _{C}\varphi (x)=\overline{\text{co}}\partial _{L}\varphi (x).
\end{equation*}%
The \textit{generalized directional derivative} of $\varphi $ at $x$ in the
direction $v$ is defined by 
\begin{equation*}
\varphi ^{0}(x;v):=\underset{y\rightarrow x,t\downarrow 0}{\limsup }\frac{%
\varphi (y+tv)-\varphi (y)}{t}.
\end{equation*}%
We have that 
\begin{equation*}
\varphi ^{0}(x;v)=\underset{\xi \in \partial _{C}\varphi (x)}{\sup }\langle
\xi ,v\rangle \,\,\forall v\in \mathbb{R}^{n}.
\end{equation*}%
We also remind the \textit{contingent directional derivative} of $\varphi $
at $x\in \text{dom}\varphi $ in the direction $v\in \mathbb{R}^{n}$, which
is given by 
\begin{equation*}
\varphi ^{\prime }(x;v):=\underset{t\rightarrow 0^{+},w\rightarrow v}{\lim
\inf }\frac{\varphi (x+tw)-\varphi (x)}{t}.
\end{equation*}%
From the definition of the proximal and the Fr\'echet subdifferentials, it
is easy to prove that 
\begin{equation}
\sigma _{\partial _{P}\varphi (x)}(\cdot )\leq \sigma _{\partial _{F}\varphi
(x)}(\cdot )\leq \varphi ^{\prime }(x;\cdot )\,\,\forall x\in \text{dom}%
\varphi .  \label{1.1}
\end{equation}%
The \textit{proximal, the Fr\'echet, and the limiting normal cones} are
defined, respectively, by 
\begin{equation*}
\mathrm{N}_{S}^{P}(x):=\partial _{P}\mathrm{I}_{S}(x),\text{ }\mathrm{N}%
_{S}^{F}(x):=\partial _{F}\mathrm{I}_{S}(x),\text{ \ }\mathrm{N}%
_{S}^{L}(x):=\partial _{L}\mathrm{I}_{S}(x).
\end{equation*}%
We also define the singular prox-subdifferential $\partial _{P,\infty
}\varphi (x)$ of $\varphi $ at $x$ as those elements $\xi \in \mathbb{R}^{n}$
such that 
\begin{equation*}
(\xi ,0)\in \mathrm{N}_{\text{epi}\varphi }^{P}(x,\varphi (x)).
\end{equation*}%
The \textit{Bouligand tangent cones to} $S$ at $x$ is defined as 
\begin{equation*}
\mathrm{T}_{S}^{B}(x):=\left\{ v\in H\,|\,\exists \,\,x_{k}\in S,\exists
\,\,t_{k}\rightarrow 0,\,\,\text{st}\,\,t_{k}^{-1}(x_{k}-x)\rightarrow v%
\mbox{ as }k\rightarrow +\infty \right\} .
\end{equation*}

Next we recall some basic concepts and properties of maximal monotone
operators. For a multivalued operator $A:\mathbb{R}^{n}\rightrightarrows 
\mathbb{R}^{n}$, the \textit{domain} and the \textit{graph} are given,
respectively, by 
\begin{equation*}
\text{dom}A:=\{x\in \mathbb{R}^{n}\,\,|\,\,A(x)\neq \emptyset \},\,\,\text{%
graph}A:=\{(x,y)\,\,|\,\,y\in A(x)\};
\end{equation*}%
to simplify, we may identify $A$ to its $\text{graph}$. The operator $A$ is
said to be monotone if 
\begin{equation*}
\langle y_{1}-y_{2},x_{1}-x_{2}\rangle \geq 0\,\,\,\mbox{for all}%
\,\,\,(x_{i},y_{i})\in \text{graph}A,i=1,2.
\end{equation*}%
If, in addition, $A$ is not properly included in any other monotone
operator, then $A$ is said to be\ \textit{maximal monotone}. In this case,
for any $x\in \text{dom}A$, $A(x)$ is closed and convex; hence, $%
(A(x))^{\circ }$ is singleton. By the maximal property, if a sequence $%
(x_{n},y_{n})_{n}\subset A$ is such that $(x_{n},y_{n})\rightarrow (x,y)$ as 
$n\rightarrow +\infty ,$ then $(x,y)\in A.$

Take $f\in L^{1}(0,T;\mathbb{R}^{n})$ for $T>0$. The differential inclusion
given in $\mathbb{R}^{n}$ as 
\begin{equation*}
\dot{x}(t)\in f(t)-A(x(t))\,\,\text{a.e.}\,\,t\in \lbrack 0,T],x(0)=x_{0}\in 
\overline{\text{dom}A},
\end{equation*}%
always has a unique solution $x(\cdot ):=x(\cdot ;x_{0})$ (see \cite{Br}),
that satisfies for a.e. \ $t\in \lbrack 0,T]$ 
\begin{equation*}
\frac{d^{+}x(t)}{dt}:=\underset{t^{\prime }\downarrow t}{\lim }\frac{%
x(t^{\prime })-x(t)}{t^{\prime }-t}=f(t^{+})-\Pi _{A(x(t))}(f(t+0)),
\end{equation*}%
where $f(t^{+}):=\underset{h\rightarrow 0,h\neq 0}{\lim }\frac{1}{h}%
\int_{t}^{t+h}f(\tau )d\tau .$

Finally, we recall Gronwall's Lemma

\begin{lemma}
\label{Gronwall} \textrm{{(Gronwall's Lemma \cite{AHB2})} Let $T>0\ $ and $%
a,b\in L^{1}(t_{0},t_{0}+T;\mathbb{R})$ such that $b(t)\geq 0$ a.e. $t\in
\lbrack t_{0},t_{0}+T].$ If, for some $0\leq \alpha <1$, an absolutely
continuous function $w:[t_{0},t_{0}+T]\rightarrow \mathbb{R}_{+}$ satisfies 
\begin{equation*}
(1-\alpha )w^{\prime }(t)\leq a(t)w(t)+b(t)w^{\alpha }(t)\,\,\ \text{a.e.}%
\,\,t\in \lbrack t_{0},t_{0}+T],
\end{equation*}%
then 
\begin{equation*}
w^{1-\alpha }(t)\leq w^{1-\alpha }(t_{0})e^{\int_{t_{0}}^{t}a(\tau )d\tau
}+\int_{t_{0}}^{t}e^{\int_{s}^{t}a(\tau )d\tau }b(s)ds,\,\,\forall t\in
\lbrack t_{0},t_{0}+T].
\end{equation*}%
}
\end{lemma}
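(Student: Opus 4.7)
The plan is to reduce to the classical linear Gronwall inequality via the substitution $v:=w^{1-\alpha}$. If $w>0$ everywhere, the chain rule gives $v'(t)=(1-\alpha)w(t)^{-\alpha}w'(t)$ a.e.; multiplying the hypothesis by $w(t)^{-\alpha}$ and using $w^{\alpha}\cdot w^{-\alpha}=1$ and $w\cdot w^{-\alpha}=v$ yields the linear differential inequality $v'(t)\leq a(t)v(t)+b(t)$. Classical Gronwall, proved by multiplying by the integrating factor $e^{-\int_{t_{0}}^{t}a(\tau)\,d\tau}$ and integrating from $t_{0}$ to $t$, then returns the stated conclusion. So the case $\alpha=0$ is handled outright (the substitution is trivial), and for $\alpha\in(0,1)$ the only issue is justifying the substitution.

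The technical obstacle is that $w$ may vanish, so $w^{-\alpha}$ blows up and the chain rule is not directly applicable. I would bypass this by a regularization: fix $\epsilon>0$, set $w_{\epsilon}:=w+\epsilon$ and $v_{\epsilon}:=w_{\epsilon}^{1-\alpha}$. Since $w_{\epsilon}\geq\epsilon$ on $[t_{0},t_{0}+T]$ and $x\mapsto x^{1-\alpha}$ is $C^{1}$ on $[\epsilon,+\infty)$, the function $v_{\epsilon}$ is absolutely continuous with $v_{\epsilon}'(t)=(1-\alpha)w_{\epsilon}(t)^{-\alpha}w'(t)$ almost everywhere. Multiplying the hypothesis by $w_{\epsilon}(t)^{-\alpha}$ and using $w^{\alpha}\leq w_{\epsilon}^{\alpha}$ together with $w=w_{\epsilon}-\epsilon$ gives a.e.
$$v_{\epsilon}'(t)\leq a(t)v_{\epsilon}(t)+b(t)+\epsilon^{1-\alpha}|a(t)|,$$
where the last term comes from the bound $\epsilon\,w_{\epsilon}^{-\alpha}\leq\epsilon^{1-\alpha}$ applied to the correction term $-a(t)\epsilon\,w_{\epsilon}(t)^{-\alpha}$; note that this estimate is uniform in $t$ and does not depend on the sign of $a$.

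Applying the classical Gronwall inequality (the $\alpha=0$ case already established) to the absolutely continuous function $v_{\epsilon}$, I would obtain
$$v_{\epsilon}(t)\leq v_{\epsilon}(t_{0})\,e^{\int_{t_{0}}^{t}a(\tau)\,d\tau}+\int_{t_{0}}^{t}e^{\int_{s}^{t}a(\tau)\,d\tau}\bigl(b(s)+\epsilon^{1-\alpha}|a(s)|\bigr)\,ds.$$
Finally I would let $\epsilon\downarrow 0$: the left-hand side converges pointwise to $w^{1-\alpha}(t)$ by continuity, the first term on the right to $w^{1-\alpha}(t_{0})\,e^{\int_{t_{0}}^{t}a(\tau)\,d\tau}$, and the correction $\epsilon^{1-\alpha}\int_{t_{0}}^{t}e^{\int_{s}^{t}a}|a(s)|\,ds$ tends to zero since $a\in L^{1}$ and the integrating factor is bounded on the compact interval $[t_{0},t_{0}+T]$. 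This yields the desired inequality.

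The main obstacle is making this regularization genuinely rigorous: one must verify that the chain rule for $v_{\epsilon}$ holds a.e. (standard, because the outer function is $C^{1}$ on the range of $w_{\epsilon}$ and $w$ is AC, so the composition is AC) and that the error term picked up from rewriting $a(t)w(t)=a(t)v_{\epsilon}(t)w_{\epsilon}^{\alpha-1}\cdot(1-\epsilon/w_{\epsilon})$ is controlled by $\epsilon^{1-\alpha}|a(t)|$ uniformly, independently of the sign of $a$. Once this estimate is pinned down, the limit passage is a routine application of dominated convergence.
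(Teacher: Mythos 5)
The paper does not prove this lemma: it is quoted verbatim from \cite{AHB2}, so there is no internal argument to compare against. Your proof is correct and self-contained. The substitution $v=w^{1-\alpha}$ is indeed the standard route, and you have correctly identified and resolved the only delicate point, namely the possible vanishing of $w$. The regularization $w_{\epsilon}=w+\epsilon$ works exactly as you describe: since $x\mapsto x^{1-\alpha}$ is $C^{1}$, hence Lipschitz, on the compact range $[\epsilon,\max w+\epsilon]$, the composition $v_{\epsilon}$ is absolutely continuous and the chain rule holds a.e.; multiplying the hypothesis by $w_{\epsilon}^{-\alpha}>0$ and using $b\geq 0$ with $w^{\alpha}\leq w_{\epsilon}^{\alpha}$ kills the nonlinear term, while the decomposition $a\,w\,w_{\epsilon}^{-\alpha}=a\,v_{\epsilon}-a\,\epsilon\,w_{\epsilon}^{-\alpha}$ together with $\epsilon\,w_{\epsilon}^{-\alpha}\leq\epsilon^{1-\alpha}$ yields the linear inequality $v_{\epsilon}'\leq a\,v_{\epsilon}+b+\epsilon^{1-\alpha}|a|$ with an error uniform in $t$. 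The linear Gronwall step via the integrating factor $e^{-\int_{t_{0}}^{t}a}$ is legitimate because that factor is absolutely continuous (as $a\in L^{1}$) and bounded away from $0$ and $\infty$ on the compact interval, so the product rule applies a.e. and the differential inequality can be integrated. The final limit $\epsilon\downarrow 0$ is even simpler than you suggest: no dominated convergence is needed, since the only $\epsilon$-dependence on the right-hand side is the continuous term $v_{\epsilon}(t_{0})=(w(t_{0})+\epsilon)^{1-\alpha}$ and the prefactor $\epsilon^{1-\alpha}$ multiplying the fixed finite integral $\int_{t_{0}}^{t}e^{\int_{s}^{t}a}|a(s)|\,ds$. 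One cosmetic remark: for $\alpha=0$ the convention $w^{0}=1$ (even where $w=0$) makes the hypothesis exactly the linear case, as you note, so the two cases are consistent.
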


\section{Solutions of the system}

In this section, we investigate and review some properties of the solutions
of differential inclusion $(\ref{ieq})$, that is given by 
\begin{equation*}
\dot{x}(t)\in F(x(t))-A(x(t)),\,\,\mbox{\ a.e.\ }t\geq 0,\text{ \ }%
\,\,x(0)=x_{0}\in \overline{\text{dom}A},
\end{equation*}%
where $A:H\rightrightarrows H$ is a maximal monotone operator and $F$ is an $%
L$-Lipschitz Cusco mapping.

\begin{definition}
A continuous function $x:[0,\infty )\rightarrow \mathbb{R}^{n}$ is said to
be a solution of $(\ref{ieq})$ if it is absolutely continuous on every
compact subset of $(0,+\infty )$ and satisfies 
\begin{equation*}
\dot{x}(t)\in F(x(t))-A(x(t))\text{ \ }\mbox{ \ a.e.\ }t\geq 0,\text{ }%
x(0)=x_{0}\in \overline{\normalfont{\text{dom$A$}}}.
\end{equation*}
\end{definition}

The following characterization will be useful in the sequel.

\begin{proposition}
\label{pro3.1}A continuous function $x:[0,\infty )\rightarrow \mathbb{R}^{n}$
is a solution of \emph{(\ref{ieq})} iff $x(\cdot )$ is absolutely continuous
on every compact subset of $(0,+\infty ),$ and for every $T>0$ there exists
a function $f\in L^{\infty }(0,T;\mathbb{R}^{n})$ with $f(t)\in F(x(t))$
a.e. $t\in \lbrack 0,T],$ such that 
\begin{equation}
\dot{x}(t)\in f(t)-A(x(t))\,\,\text{a.e.}\,\,\,t\in \lbrack
0,T],\,\,x(0)=x_{0}\in \overline{\normalfont{\text{dom$A$}}}.  \label{301}
\end{equation}
\end{proposition}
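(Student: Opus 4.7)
The plan is to prove the two implications separately; the reverse direction is immediate from the definitions, while the forward direction reduces to a measurable selection argument on a certain intersection multifunction.

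For $(\Leftarrow)$, if $f\in L^{\infty}(0,T;\mathbb{R}^{n})$ with $f(t)\in F(x(t))$ a.e.\ satisfies (\ref{301}), then $\dot{x}(t)\in f(t)-A(x(t))\subset F(x(t))-A(x(t))$ a.e.\ on $[0,T]$. Since $T>0$ is arbitrary, $x$ is a solution of (\ref{ieq}).

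For $(\Rightarrow)$, I fix $T>0$ and introduce the set-valued map $G:[0,T]\rightrightarrows\mathbb{R}^{n}$ defined by
\begin{equation*}
G(t):=F(x(t))\cap\bigl(\dot{x}(t)+A(x(t))\bigr),
\end{equation*}
which is nonempty for a.e.\ $t\in[0,T]$ by the inclusion $\dot{x}(t)\in F(x(t))-A(x(t))$. I then verify that $G$ is a closed-valued measurable multifunction: (i) $t\mapsto F(x(t))$ is upper semicontinuous with compact convex values (Cusco composed with the continuous $x$), hence Borel measurable; (ii) $t\mapsto A(x(t))$ has closed graph, because $\mathrm{graph}\,A$ is closed by maximal monotonicity and $x$ is continuous, and in $\mathbb{R}^{n}$ a graph-closed set-valued map is Borel measurable (alternatively, the Yosida approximation $A_{\lambda}\circ x$ is continuous and converges to $A\circ x$); (iii) adding the measurable single-valued $\dot{x}(\cdot)$ preserves measurability; (iv) the intersection of two closed-valued measurable multifunctions in $\mathbb{R}^{n}$ is measurable. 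The Kuratowski--Ryll-Nardzewski selection theorem then produces a measurable selection $f:[0,T]\rightarrow\mathbb{R}^{n}$ with $f(t)\in G(t)$ a.e.

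To promote $f$ to $L^{\infty}(0,T;\mathbb{R}^{n})$, I observe that $x([0,T])$ is compact by continuity of $x$, and that $F$ being $L$-Lipschitz with compact values is bounded on compact sets, so there exists $C>0$ with $\Vert F(x(t))\Vert\leq C$ for all $t\in[0,T]$; hence $\Vert f(t)\Vert\leq C$ a.e.\ Since by construction $f(t)\in F(x(t))$ and $f(t)-\dot{x}(t)\in A(x(t))$ a.e., the inclusion (\ref{301}) holds, which also gives the absolute continuity of $x$ on compact subsets of $(0,+\infty)$ via the classical theory for Lipschitz perturbations of maximal monotone operators recalled in Section 2.

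The main obstacle is the measurability of $t\mapsto A(x(t))$, since $A$ is only maximal monotone and possibly unbounded, so upper semicontinuity is not directly available; this is handled by exploiting closedness of $\mathrm{graph}\,A$ combined with continuity of $x$, or equivalently via the continuous Yosida approximation. Everything else is bookkeeping around the selection theorem and the Lipschitz bound on $F$.
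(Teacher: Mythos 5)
Your proof is correct and follows essentially the same route as the paper: both define the intersection multifunction $G(t)=F(x(t))\cap(\dot{x}(t)+A(x(t)))$, establish its measurability, extract a measurable selection, and bound it using the boundedness of $F$ on the compact image $x([0,T])$. The only (minor) difference is the sub-argument for measurability of $t\mapsto A(x(t))$: the paper writes $A(x(t))=\bigcup_{n}A(x(t))\cap\mathrm{B}_{n}$ as a countable union of upper semicontinuous truncations, while you invoke closedness of the graph (or the Yosida approximation); both are standard and valid.
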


\begin{proof}
The sufficient condition is clear and, so, we only need to justify the
necessary part. Suppose that $x(\cdot )$ is any solution of $(\ref{ieq})$
and fix $T>0.$ Since $F$ is Lipschitz and $x(\cdot )$ is continuous, there
exists $m>0$ such that $F(x(t))\subset \mathrm{B}_{m}$ for all $t\in \lbrack
0,T]$. We define the set-valued mapping $G:[0,T]\rightrightarrows \mathbb{R}%
^{n}$ as 
\begin{equation*}
G(t):=[\dot{x}(t)+A(x(t))]\cap F(x(t))=\Big[\lbrack \dot{x}(t)+A(x(t))]\cap 
\mathrm{B}_{m}\Big]\cap F(x(t)).
\end{equation*}%
We are going to check that $G$ is measurable. Since operator $A$ is maximal
monotone, the mappings 
\begin{equation*}
x\mapsto A_{n}(x):=A(x)\cap \mathrm{B}_{n},\text{ }n\geq 1,
\end{equation*}%
are upper semi-continuous, and so are the mappings 
\begin{equation*}
t\mapsto A_{n}(x(t)):=A(x(t))\cap \mathrm{B}_{n},\text{ }n\geq 1,
\end{equation*}%
due to the continuity of the solution $x(\cdot ).$ Then, due to the relation 
$A(x(t))=\cup _{n\in \mathbb{N}}{A_{n}(}x(t)),$ we deduce that the
multifunction $t\longmapsto A(x(t)\mathbb{)}$ is measurable. Since $\dot{x}%
(t)=\lim_{n\rightarrow +\infty }n(x(t+\frac{1}{n})-x(t))$ for a.e. $t\in
\lbrack 0,T],$ $\dot{x}(\cdot )$ is measurable, we deduce that the
multifunction $t\longmapsto \lbrack \dot{x}(t)+A(x(t))]\cap \mathrm{B}_{m}$
is measurable.\ Similarly, the multifunction $t\longmapsto F(x(t))$ is
measurable. Consequently, according to \cite[Proposition III.4]{CV}, the
mapping $G$ is measurable, and we conclude from \cite[Theorem III.6]{CV}
that $G$ admits a measurable selection; i.e., a measurable function $%
f:[0,T]\rightarrow \mathbb{R}^{n}$ such that 
\begin{equation*}
f(t)\in G(t)=[\dot{x}(t)+A(x(t))]\cap \mathrm{B}_{m}\cap F(x(t))\subset
F(x(t))\text{ }\mbox{\ a.e. \ }t\in \lbrack 0,T].
\end{equation*}%
Hence,\ $\dot{x}(t)\in f(t)-A(x(t))$ and $\left\Vert f(t)\right\Vert \leq
\left\Vert F(x(t))\right\Vert \leq m$, so that $f\in L^{\infty }(0,T;\mathbb{%
R}^{n}).$
\end{proof}

\bigskip

The next theorem shows that differential inclusion $(\ref{ieq})$ has at
least one solution whenever $x_{0}\in \overline{\text{dom}A}$. We use the
following lemma, which is a particular case of \cite[Theorem A]{A}.

\begin{lemma}
\label{l.3.1}Let $G:\mathbb{R}^{n}\rightrightarrows \mathbb{R}^{n}$ be a
Lipschitz multifunction\ with nonempty, convex and\ compact values, and let $%
x\in \mathbb{R}^{n},v\in G(x).$ Then there exists a Lipschitz selection $f$
of $G$ such that $f(x)=v.$
\end{lemma}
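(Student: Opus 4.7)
The statement is a version of Aubin's Lipschitz selection theorem, and my plan is to build a Lipschitz parametrization of $G$ and then extract the fibre passing through the prescribed point $(x,v)$.

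Concretely, let $L$ be the Lipschitz constant of $G$ with respect to the Hausdorff distance. The key object to construct is an integer $k\geq 1$ and a Lipschitz map $\varphi:\R^{n}\times \mathrm{B}(\theta,1)\to\R^{n}$, with the unit ball $\mathrm{B}(\theta,1)$ taken in $\R^{k}$, such that $\varphi(y,\cdot)$ is surjective onto $G(y)$ for every $y\in\R^{n}$, and such that the Lipschitz constant of $\varphi$ in the first variable is controlled by $L$. Producing this map is the main technical obstacle and is essentially the content of \cite[Theorem A]{A}. One standard route goes through the Steiner point map: it sends a convex compact subset of $\R^{n}$, equipped with the Hausdorff distance, in a Lipschitz way into $\R^{n}$; iterating the Steiner construction on suitably shifted slices of $G(y)$ along an exhaustive family of unit directions yields a Lipschitz surjective parametrization of each $G(y)$ by a ball of fixed dimension, with joint Lipschitz control in $(y,u)$ inherited from the Hausdorff-Lipschitzianity of $G$. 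Crucially for this step, the convexity and compactness of the values of $G$ are used.

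Once $\varphi$ is in hand, the conclusion is immediate. Since $v\in G(x)=\varphi(x,\mathrm{B}(\theta,1))$, I pick any $u_{0}\in \mathrm{B}(\theta,1)$ with $\varphi(x,u_{0})=v$, and set $f(y):=\varphi(y,u_{0})$. Then $f(y)=\varphi(y,u_{0})\in\varphi(y,\mathrm{B}(\theta,1))=G(y)$ for every $y$, $f(x)=v$ by construction, and $f$ is Lipschitz on $\R^{n}$ with constant bounded by the Lipschitz constant of $\varphi$ in its first argument. This yields the required Lipschitz selection of $G$ passing through $(x,v)$. Note that an equivalent reduction, useful for bookkeeping, is first to translate by $v$ (replacing $G(\cdot)$ with $G(\cdot)-v$) so that one is looking for a Lipschitz selection of a Lipschitz Cusco multifunction that takes the value $\theta$ at $x$; the construction above is unaffected by this normalization.
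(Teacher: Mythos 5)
Your argument is correct in outline, but it takes a genuinely different route from the paper, which gives no construction at all: the lemma is obtained there as the special case of Artstein's extension theorem for Lipschitz selections (\cite[Theorem A]{A}), applied with the prescribed selection defined on the single point $\{x\}$ by $f_0(x)=v$ and then extended to a Lipschitz selection of $G$ on all of $\mathbb{R}^{n}$. You instead invoke the classical Lipschitz parametrization theorem for Lipschitz multifunctions with nonempty convex compact values (the Steiner-point construction of Aubin--Cellina, see also \cite{AF}), and then freeze the parameter at some $u_{0}$ in the fibre over $(x,v)$; that final step is airtight, and convexity and compactness of the values are indeed exactly what the Steiner selection requires. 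Two caveats. First, your attribution is off: the parametrization theorem is \emph{not} ``essentially the content of \cite[Theorem A]{A}'' --- Artstein's result is an extension theorem proved by a different argument, so your route bypasses the paper's cited tool rather than reconstructing it, and the existence of the joint-Lipschitz parametrization remains a black box in your write-up just as Theorem A does in the paper's. Second, the selection you produce is Lipschitz with a constant of order $nL$ (inherited from the Lipschitz constant of the Steiner map in the Hausdorff metric) rather than $L$; this is harmless here, since the lemma asks only for \emph{some} Lipschitz selection and the paper later works with an arbitrary constant $c\geq L$, but it is worth stating explicitly. What your approach buys is a single parametrization yielding, simultaneously, a Lipschitz selection through every point of the graph of $G$; what the paper's approach buys is the stronger extension statement (from arbitrary subsets, not just singletons) with no dimensional loss in the way the prescribed data are interpolated.
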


\begin{theorem}
\label{theo.3.1}Differential inclusion $(\ref{ieq})$ has at least one
solution.
\end{theorem}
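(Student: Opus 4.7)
The plan is to reduce the set-valued right-hand side to a single-valued one and then invoke the classical existence theory for Lipschitz perturbations of maximal monotone operators. More precisely, I would first apply Lemma \ref{l.3.1} to $F$ at the point $x_0$: choosing any $v_0\in F(x_0)$, the lemma produces a globally Lipschitz selection $f:\mathbb{R}^n\to\mathbb{R}^n$ of $F$ (so $f(y)\in F(y)$ for every $y\in\mathbb{R}^n$). Once such an $f$ is fixed, any absolutely continuous solution of
\begin{equation*}
\dot{x}(t)\in f(x(t))-A(x(t))\ \text{a.e. } t\geq 0,\quad x(0)=x_0\in\overline{\mathrm{dom}\,A},
\end{equation*}
automatically satisfies $\dot{x}(t)\in F(x(t))-A(x(t))$, and hence is a solution of (\ref{ieq}) in the sense of Proposition \ref{pro3.1}.

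The heart of the argument is therefore to produce such a solution of the Lipschitz-in-state perturbed inclusion. This is classical and can be obtained by a Picard-type fixed-point scheme on each compact interval $[0,T]$. Starting with $x^{(0)}(t):=x_0$, I would define $x^{(k+1)}$ recursively as the unique solution of
\begin{equation*}
\dot{x}^{(k+1)}(t)\in f(x^{(k)}(t))-A(x^{(k+1)}(t))\ \text{a.e. }t\in[0,T],\quad x^{(k+1)}(0)=x_0,
\end{equation*}
whose existence and uniqueness is guaranteed by the $L^\infty$-forcing result recalled at the end of Section 2 (since $f\circ x^{(k)}\in L^\infty(0,T;\mathbb{R}^n)$ thanks to the continuity of $x^{(k)}$ and Lipschitzianity of $f$). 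Using the monotonicity of $A$, one gets the standard contraction-type estimate
\begin{equation*}
\|x^{(k+1)}(t)-x^{(k)}(t)\|\leq L\int_0^t\|x^{(k)}(s)-x^{(k-1)}(s)\|\,ds,
\end{equation*}
where $L$ is the Lipschitz constant of $f$, which iterates to $\|x^{(k+1)}(t)-x^{(k)}(t)\|\leq (Lt)^k\|x^{(1)}-x_0\|_{\infty}/k!$. Hence $(x^{(k)})$ converges uniformly on $[0,T]$ to some continuous $x(\cdot)$, and a standard passage to the limit, combined with the strong-weak closedness of the graph of $A$, shows that $x(\cdot)$ is absolutely continuous on $(0,T]$ and solves the Lipschitz-perturbed inclusion. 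Concatenating these solutions on $[0,T]$ for $T\to+\infty$ yields a solution on $[0,+\infty)$.

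The main obstacle is really contained in the Lipschitz-in-state existence step: one must justify the monotonicity estimate for solutions of two different right-hand-sides, the fact that the iterates $x^{(k)}$ stay in $\overline{\mathrm{dom}\,A}$ and remain continuous up to $t=0$, and the passage to the limit. These are by now standard consequences of Brezis's theory \cite{Br}, so I would either cite the corresponding theorem on perturbations of maximal monotone operators by a Lipschitz map directly, or carry out the Picard iteration as sketched above; in either case Lemma \ref{l.3.1} is the device that bridges the set-valued $F$ and the single-valued theory.
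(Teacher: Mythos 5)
Your proposal is correct and follows essentially the same route as the paper: both reduce (\ref{ieq}) to the single-valued case by taking a Lipschitz selection $f$ of $F$ via Lemma \ref{l.3.1} and then invoke the classical existence theory for Lipschitz perturbations of maximal monotone operators (the paper simply cites \cite{Br, B} for this step, which you optionally re-derive by Picard iteration). The extra detail you supply is a standard expansion of the cited result rather than a different argument.
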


\begin{proof}
Fix $x_{0}\in \overline{\text{dom}A}$ and, according to Lemma $\ref{l.3.1} $%
, let $f$ be a Lipschitz selection of $F.$ Then the differential inclusion 
\begin{equation*}
\dot{x}(t)\in f(x(t))-A(x(t)),\,\,\text{a.e.}\,\,t\geq 0,x(0)=x_{0},
\end{equation*}%
admits a unique solution $x(\cdot )$, which is absolutely continuous on
every compact subset of $(0,+\infty )$ (see e.g. \cite{Br, B}). It follows
that $x(\cdot )$ is also a solution of differential inclusion $(\ref{ieq}).$
\end{proof}

We also give some further properties of the solutions of differential
inclusion $(\ref{ieq}),$ which will be used in the sequel. Given a set $%
S\subset H$ and $x\in \text{dom}A$ we denote 
\begin{equation*}
(S-A(x))^{\circ }:=\bigcup\limits_{s\in S}(s-A(x))^{\circ }=\left\{ s-\Pi
_{A(x)}(s)\mid s\in S\right\} .
\end{equation*}

\begin{proposition}
\label{pro3.2}Fix\ $x_{0}\in \overline{\normalfont{\text{dom$A$}}}$ and let $%
x(\cdot ):=x(\cdot ;x_{0})$ be\ any solution of $(\ref{ieq})$. Then the\
following assertions hold\emph{:}

\begin{itemize}
\item[(i)] $x(t)\in \normalfont{\text{dom$A$}},$ for every\ $t>0,$ and for
a.e. $t\geq 0$ 
\begin{equation*}
\frac{d^{+}x(t)}{dt}:=\underset{h\downarrow 0}{\lim }\frac{x(t+h)-x(t)}{h}%
\in \left( F(x(t))-A(x(t))\right) ^{\circ }.
\end{equation*}

Conversely, if $x_{0}\in \normalfont{\text{dom$A$}}$, then for any $v\in
\lbrack F(x_{0})-A(x_{0})]^{\circ }$ there exists a solution $y(\cdot )$ of $%
(\ref{ieq})$ such that 
\begin{equation*}
y(0)=x_{0},\,\frac{d^{+}y(0)}{dt}=v.
\end{equation*}

\item[(ii)] There exists a real number $c>0$ such that for any $x_{0}\in %
\normalfont{\text{dom$A$}}$ and any solutions $x(\cdot ):=x(\cdot ;x_{0})$
and $y(\cdot ):=y(\cdot ;x_{0})$ of $(\ref{ieq})$, one has for all $t\geq 0$ 
\begin{equation*}
\left\Vert x(t)-x_{0}\right\Vert \leq 3(\left\Vert F(x_{0})\right\Vert
+\left\Vert A^{\circ }(x_{0})\right\Vert )te^{ct},
\end{equation*}%
\begin{equation*}
\left\Vert x(t)-y(t)\right\Vert \leq 4(\left\Vert F(x_{0})\right\Vert
+\left\Vert A^{\circ }(x_{0})\right\Vert )te^{ct}.
\end{equation*}%
Consequently, for every $T>0$ there exists $\rho >0$ such that 
\begin{equation*}
x(t)\in \mathrm{B}(x_{0},\rho )\,\,\forall t\in \lbrack 0,T].
\end{equation*}
\end{itemize}
\end{proposition}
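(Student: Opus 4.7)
The plan is to leverage the single-valued reformulation provided by Proposition \ref{pro3.1} so as to reduce \emph{(i)} to Brezis's classical right-derivative formula for $\dot x\in f-A(x)$, and \emph{(ii)} to two successive Gronwall-type comparisons that combine monotonicity of $A$ with the Hausdorff-Lipschitz continuity of $F$.

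For \emph{(i)}, fix a solution $x(\cdot)$ and $T>0$. Proposition \ref{pro3.1} produces $f\in L^{\infty}(0,T;\mathbb{R}^{n})$ with $f(t)\in F(x(t))$ a.e.\ and $\dot x(t)\in f(t)-A(x(t))$, and the Brezis theorem recalled at the end of Section 2 gives $x(t)\in\mathrm{dom}\,A$ for every $t>0$ together with
\[
\frac{d^{+}x(t)}{dt}=f(t^{+})-\Pi_{A(x(t))}(f(t^{+}))\quad\text{a.e.\ }t\ge 0.
\]
For a.e.\ $t$, $t$ is a right Lebesgue point of the $L^{\infty}$ function $f$, hence $f(t^{+})=f(t)\in F(x(t))$; the right derivative then lies in $(F(x(t))-A(x(t)))^{\circ}$ by the very definition of the latter set. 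For the converse, given $v=s-\Pi_{A(x_{0})}(s)$ with $s\in F(x_{0})$, Lemma \ref{l.3.1} supplies a Lipschitz selection $\varphi$ of $F$ with $\varphi(x_{0})=s$; the classical theory for single-valued Lipschitz perturbations of maximal monotone operators \cite{Br} produces the unique solution $y(\cdot)$ of $\dot y\in\varphi(y)-A(y)$, $y(0)=x_{0}$, which is automatically a solution of (\ref{ieq}) and, evaluating the right-derivative formula at $t=0$, satisfies $\tfrac{d^{+}y(0)}{dt}=\varphi(x_{0})-\Pi_{A(x_{0})}(\varphi(x_{0}))=v$.

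For \emph{(ii)}, let $x(\cdot),y(\cdot)$ be two solutions starting at $x_{0}\in\mathrm{dom}\,A$ and write $\dot x=f_{x}-a_{x}$, $\dot y=f_{y}-a_{y}$ with $f_{x}(t)\in F(x(t))$, $f_{y}(t)\in F(y(t))$, $a_{x}(t)\in A(x(t))$, $a_{y}(t)\in A(y(t))$. Put $a_{0}:=A^{\circ}(x_{0})$. Monotonicity of $A$, the Cauchy-Schwarz inequality, and the Lipschitz bound $\|F(x(t))\|\le\|F(x_{0})\|+L\|x(t)-x_{0}\|$ together give
\[
\tfrac{1}{2}\tfrac{d}{dt}\|x(t)-x_{0}\|^{2}\le\bigl(L\|x(t)-x_{0}\|+\|F(x_{0})\|+\|A^{\circ}(x_{0})\|\bigr)\|x(t)-x_{0}\|\quad\text{a.e.},
\]
and Lemma \ref{Gronwall} (applied with $\alpha=1/2$ to $\|x(t)-x_{0}\|^{2}$), followed by the elementary inequality $\tfrac{e^{Lt}-1}{L}\le te^{Lt}$, yields the first estimate with the constant $3$ comfortably generous. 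The second estimate proceeds analogously: monotonicity of $A$ provides $\tfrac{1}{2}\tfrac{d}{dt}\|x-y\|^{2}\le\langle f_{x}-f_{y},\,x-y\rangle$, while choosing a closest point in $F(y(t))$ to $f_{x}(t)$ (within $L\|x(t)-y(t)\|$ of $f_{x}(t)$ by Hausdorff-Lipschitz continuity of $F$) splits $\|f_{x}(t)-f_{y}(t)\|\le L\|x(t)-y(t)\|+2\|F(y(t))\|$; substituting the just-obtained bound on $\|y(t)-x_{0}\|$ and applying Gronwall's lemma once more delivers the desired inequality, with $c$ possibly larger than $L$ (absorbed into the factor $4$). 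The final boundedness assertion is then immediate by taking $\rho:=3(\|F(x_{0})\|+\|A^{\circ}(x_{0})\|)Te^{cT}$.

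The only delicate point is the identification $f(t^{+})\in F(x(t))$ in \emph{(i)}: it relies on the $L^{\infty}$ regularity of the selection produced by Proposition \ref{pro3.1}, guaranteeing that almost every $t$ is a right Lebesgue point of $f$ and thereby linking Brezis's formula for the single-valued perturbed problem to the multivalued set $F(x(t))$. The remainder of the proof amounts to invoking that formula and carrying out standard Gronwall bookkeeping.
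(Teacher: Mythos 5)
Your proposal is correct, and part \emph{(i)} follows essentially the paper's own path: reduce to the single-valued problem via Proposition \ref{pro3.1}, invoke the Br\'ezis right-derivative formula, identify $f(t^{+})$ with an element of $F(x(t))$, and build the converse solution from a Lipschitz selection via Lemma \ref{l.3.1}. Your justification that $f(t^{+})\in F(x(t))$ differs slightly (Lebesgue points of the $L^{\infty}$ selection, so $f(t^{+})=f(t)$ a.e.) from the paper's, which instead shows $\frac{1}{h}\int_{0}^{h}f(t+\tau)\,d\tau\in F(x(t))+\varepsilon L\mathbb{B}$ using the Lipschitz continuity of $F$, the continuity of $x$, and the closed convexity of $F(x(t))$; both arguments are valid and of comparable length.

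Part \emph{(ii)} is where you genuinely diverge. The paper introduces an auxiliary trajectory $z(\cdot)$ generated by a single-valued Lipschitz selection $f$ of $F$ with $z(0)=x_{0}$, bounds $\|z(t)-x_{0}\|$ through the classical growth estimate $\bigl\Vert\frac{d^{+}z(t)}{dt}\bigr\Vert\leq e^{ct}\bigl\Vert\frac{d^{+}z(0)}{dt}\bigr\Vert$ for Lipschitz perturbations of maximal monotone operators, then compares every solution $x$ to $z$ by monotonicity plus Gronwall, and finally obtains both stated inequalities by the triangle inequality through $z$ (whence the constants $3=1+2$ and $4=2+2$). You instead estimate $\frac{1}{2}\frac{d}{dt}\|x(t)-x_{0}\|^{2}$ and $\frac{1}{2}\frac{d}{dt}\|x(t)-y(t)\|^{2}$ directly, using monotonicity of $A$ against the fixed element $A^{\circ}(x_{0})\in A(x_{0})$ and the Hausdorff--Lipschitz continuity of $F$, and apply Lemma \ref{Gronwall} with $\alpha=\tfrac12$ twice. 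This is more elementary (it never appeals to the decay estimate for the semigroup generated by the single-valued problem), it works with $c=L$ rather than the possibly larger Lipschitz constant of a selection, and if you track the constants it actually yields $\|x(t)-x_{0}\|\leq Mte^{Lt}$ and $\|x(t)-y(t)\|\leq 2Mte^{Lt}$ with $M=\|F(x_{0})\|+\|A^{\circ}(x_{0})\|$, i.e.\ sharper than the stated $3$ and $4$ --- so your remark that the constants are ``comfortably generous'' is accurate. What the paper's detour buys is the reuse of a ready-made classical estimate rather than a second Gronwall computation; what yours buys is a shorter, self-contained argument with better constants. Only cosmetic care is needed at $t=0$ (apply Gronwall on $[\varepsilon,T]$ and let $\varepsilon\downarrow0$, since solutions are a priori only absolutely continuous on compact subsets of $(0,+\infty)$), a point the paper also glosses over.
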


\begin{proof}
$(i)$\ According\ to Proposition \ref{pro3.1}, for each $T>0$ there exists
some $f\in L^{\infty }(0,T;\mathbb{R}^{n})$ with $f(t)\in F(x(t))$ a.e. $%
t\in \lbrack 0,T],$ such that $x(\cdot )$ is the unique solution of $(\ref%
{301});$ hence, by \cite{Br} we deduce that $x(\cdot )$ satisfies\ $x(t)\in 
\text{dom}A$ for all $t\in \left( 0,T\right) ,$ and 
\begin{equation}
\frac{d^{+}x(t)}{dt}=\left( f(t^{+})-A(x(t))\right) ^{\circ }\text{ \ a.e. }%
t\in \left( 0,T\right) ,  \label{b}
\end{equation}%
where $f(t^{+}):=\lim_{h\rightarrow 0}h^{-1}\int_{0}^{h}f(t+\tau )d\tau .$
Moreover, given $\varepsilon >0$ there exists some $h>0$ such that for a.e. $%
\tau \in (0,h)$ we have 
\begin{equation*}
f(t+\tau )\in F(x(t+\tau ))\subset F(x(t))+L\left\Vert x(t+\tau
)-x(t)\right\Vert \mathbb{B}\subset F(x(t))+\varepsilon L\mathbb{B},
\end{equation*}%
and so $\underset{h\rightarrow 0^{+}}{\lim }\frac{1}{h}\int_{0}^{h}f(t+\tau
)d\tau \in F(x(t))+\varepsilon L\mathbb{B}$ (this last set is convex and
closed). Hence, as $\varepsilon $ goes to $0$ we get $f(t^{+})\in F(x(t))$,
and $(i)$ follows from (\ref{b}).

Conversely, we assume that $x_{0}\in \text{dom}A$ and take $v\in \lbrack
F(x_{0})-A(x_{0})]^{\circ }.$ We choose $w\in F(x_{0})$ such that $v=w-\Pi
_{A(x_{0})}(w).$ According to Lemma \ref{l.3.1}, there exists a Lipschitz
selection $f$ of $F$ such that $f(x_{0})=w$. Then the unique solution $%
y(\cdot )$ of the following differential inclusion 
\begin{equation*}
\dot{y}(t)\in f(y(t))-A(y(t)),\text{ \ }y(0)=x_{0},
\end{equation*}%
satisfies 
\begin{equation*}
\frac{d^{+}y(0)}{dt}=f(x_{0})-\Pi _{A(x_{0})}(f(x_{0}))=w-\Pi _{A(x_{0})}(w),
\end{equation*}%
and the proof of $(i)$ is complete.\newline
$(ii)$ Let $x(\cdot )$ be a solution of differential inclusion $(\ref{ieq})$%
, with $x(0)=x_{0},$ and fix $T>0$. Then by Proposition \ref{pro3.1} there
exist functions $k,g\in L^{1}(0,T;\mathbb{R}^{n})$ such that $k(t)\in
F(x(t)),\,g(t)\in A(x(t)),\,$and%
\begin{equation*}
\dot{x}(t)=k(t)-g(t)\text{ \ a.e}\,\,t\in \left[ 0,T\right] .
\end{equation*}%
We also choose by Lemma \ref{l.3.1} a Lipschitz mapping $f:\mathbb{R}%
^{n}\rightarrow \mathbb{R}^{n},$ with Lipschitz constant $c$ ($c\geq L$),
and consider the unique solution $z(\cdot )$ of the differential inclusion 
\begin{equation*}
\dot{z}(t)\in f(z(t))-A(z(t))\,\,\mbox{a.e.}\,\,t\geq 0,z(0)=x_{0}.
\end{equation*}%
So, for any $t\geq 0$ one has $\left\Vert \frac{d^{+}z(t)}{dt}\right\Vert
\leq \left\Vert \frac{d^{+}z(0)}{dt}\right\Vert $ and 
\begin{equation*}
\left\Vert \frac{d^{+}z(0)}{dt}\right\Vert =\left\Vert
(f(x_{0})-A(x_{0}))^{\circ }\right\Vert \leq \left\Vert F(x_{0})\right\Vert
+\left\Vert A^{\circ }(x_{0})\right\Vert ,
\end{equation*}%
so that 
\begin{eqnarray}
\left\Vert z(t)-x_{0}\right\Vert &\leq &\int_{0}^{t}e^{c\tau }\left\Vert 
\frac{d^{+}z(0)}{dt}\right\Vert d\tau =\frac{e^{ct}-1}{c}\left\Vert \frac{%
d^{+}z(0)}{dt}\right\Vert  \notag \\
&\leq &\frac{e^{ct}-1}{c}(\left\Vert F(x_{0})\right\Vert +\left\Vert
A^{\circ }(x_{0})\right\Vert )  \label{e} \\
&\leq &te^{ct}(\left\Vert F(x_{0})\right\Vert +\left\Vert A^{\circ
}(x_{0})\right\Vert ).  \label{302}
\end{eqnarray}%
By the Lipschitzianity of $F$ we choose a function $w(\cdot ):\left[ 0,T%
\right] \rightarrow \mathbb{R}^{n}$ such that 
\begin{equation}
w(t)\in F(z(t)),\text{ }\left\Vert k(t)-w(t)\right\Vert \leq L\left\Vert
x(t)-z(t)\right\Vert \ \text{\ }\forall t\in \left[ 0,T\right] .  \label{c}
\end{equation}%
Then we obtain%
\begin{equation*}
\begin{split}
\langle \dot{x}(t)-\dot{z}(t),x(t)-z(t)\rangle =& \left\langle
k(t)-g(t)-f(z(t))+\Pi _{A(z(t))}(f(z(t))),x(t)-z(t)\right\rangle \\
=& \left\langle k(t)-f(z(t)),x(t)-z(t)\right\rangle \\
&\,\,\, +\underset{\leq 0,\text{ by the monotonicity of }A}{\underbrace{%
\left\langle -g(t)+\Pi _{A(z(t))}(f(z(t))),x(t)-z(t)\right\rangle }} \\
\leq & \langle k(t)-w(t),x(t)-z(t)\rangle +\langle
w(t)-f(z(t)),x(t)-z(t)\rangle \\
\leq & L\left\Vert x(t)-z(t)\right\Vert ^{2}+2\left\Vert F(z(t))\right\Vert
\left\Vert x(t)-z(t)\right\Vert \text{ \ (by (\ref{c}))} \\
\leq & L\left\Vert x(t)-z(t)\right\Vert ^{2}+2\big(\left\Vert
F(x_{0})\right\Vert +L\left\Vert z(t)-x_{0}\right\Vert \big)\left\Vert
x(t)-z(t)\right\Vert \\
\leq & L\left\Vert x(t)-z(t)\right\Vert ^{2}+ \\
& \,\,\,2\Big(\left\Vert F(x_{0})\right\Vert +(e^{ct}-1)(\left\Vert
F(x_{0})\right\Vert +\left\Vert A^{\circ }(x_{0})\right\Vert )\Big)%
\left\Vert x(t)-z(t)\right\Vert \\
\leq & L\left\Vert x(t)-z(t)\right\Vert ^{2}+2(\left\Vert
F(x_{0})\right\Vert +\left\Vert A^{\circ }(x_{0})\right\Vert
)e^{ct}\left\Vert x(t)-z(t)\right\Vert .
\end{split}%
\end{equation*}%
Consequential, from\ the Gronwall Lemma we get,\ for every $t\geq 0$, 
\begin{equation*}
\left\Vert x(t)-z(t)\right\Vert \leq 2(\left\Vert F(x_{0})\right\Vert
+\left\Vert A^{\circ }(x_{0})\right\Vert )te^{ct},
\end{equation*}%
which together with $(\ref{302})$ give us 
\begin{equation*}
\left\Vert x(t)-x_{0}\right\Vert \leq 3(\left\Vert F(x_{0})\right\Vert
+\left\Vert A^{\circ }(x_{0})\right\Vert )te^{ct},
\end{equation*}%
and, for every other solution $y=y(\cdot ;x_{0}),$%
\begin{equation*}
\left\Vert x(t)-y(t)\right\Vert \leq \left\Vert x(t)-z(t)\right\Vert
+\left\Vert y(t)-z(t)\right\Vert \leq 4(\left\Vert F(x_{0})\right\Vert
+\left\Vert A^{\circ }(x_{0})\right\Vert )te^{ct};
\end{equation*}%
that is the conclusion of $(ii)$ follows.
\end{proof}

\section{Strong and weak invariant sets}

In this section, we give explicit characterizations for a closed set $%
S\subset \mathbb{R}^{n}$ to be\ strong or\ weak invariant for differential
inclusion $(\ref{ieq}),$ 
\begin{equation*}
\dot{x}(t)\in F(x(t))-A(x(t)),\,\,\mbox{\ a.e.\ }t\geq 0,\text{ \ }%
\,\,x(0)=x_{0}\in \overline{\text{dom}A},
\end{equation*}%
where $A:H\rightrightarrows H$ is a maximal monotone operator and $F$ is an $%
L$-Lipschitz Cusco mapping. Invariance criteria are written\ exclusively by
means of the data; that is, multifunction $F$ and operator $A$, and involve
the geometry of the set $S$, using the associated proximal and Fr\'echet
normal cones.

\begin{definition}
Let $S$ be a closed subset of $\mathbb{R}^n.$

\begin{itemize}
\item[(i)] $S$ is said to be strong invariant if for any $x_{0}\in S\cap 
\overline{\normalfont{\text{dom$A$}}}$ and any solution $x(\cdot ;x_{0})$ of 
$(\ref{ieq}) $, we have 
\begin{equation*}
x(t;x_{0})\in S\,\,\,\forall t\geq 0.
\end{equation*}

\item[(ii)] $S$ is said to be weak invariant if for any $x_{0}\in S\cap 
\overline{\normalfont{\text{dom$A$}}}$, there exists at least one solution $%
x(\cdot ;x_{0})$ of $(\ref{ieq})$ such that 
\begin{equation*}
x(t;x_{0})\in S\,\,\,\forall t\geq 0.
\end{equation*}
\end{itemize}
\end{definition}

Since any solution of differential inclusion $(\ref{ieq})$ lives in $%
\overline{\text{dom}A}$ (Proposition \ref{pro3.2}), we may assume without
loss of generality that $S$ is a closed subset of $\overline{\text{dom}A}$.
We shall need the following two lemmas.

\begin{lemma}
(e.g. \cite[Lemma A.1]{AHT1})\label{l.4.1}Let $S\subset \mathbb{R}^{n}$ be
closed. Then for every $x\in \mathbb{R}^{n}\setminus S$ we have 
\begin{equation*}
\partial _{L}d_{S}(\cdot )(x)\in \Big\{\frac{x-\Pi _{S}(x)}{d_{S}(x)}\Big\}\,%
\text{and }\partial _{C}d_{S}(\cdot )(x)\in \overline{\normalfont{\text{co}}}%
\Big\{\frac{x-\Pi _{S}(x)}{d_{S}(x)}\Big\}.
\end{equation*}
\end{lemma}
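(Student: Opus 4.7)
My plan is to reduce the claim for $\partial_L d_S$ to a direct computation of the proximal subdifferential and then derive the Clarke statement as a formal convexification. The main technical estimate sits at the proximal level; the limiting version is a standard compactness argument, and the Clarke version follows almost for free because $d_S$ is $1$-Lipschitz.

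First I would fix $x\in \mathbb{R}^n\setminus S$ and $\xi\in\partial_P d_S(x)$ and show that $\xi=(x-s)/d_S(x)$ for some $s\in\Pi_S(x)$. Pick any $s\in\Pi_S(x)$ and apply the proximal subgradient inequality along the segment $y_t:=x+t(s-x)$ for small $t>0$. Since $d_S(y_t)\le \|y_t-s\|=(1-t)\|x-s\|=(1-t)d_S(x)$, dividing by $t$ and letting $t\downarrow 0$ yields $\langle \xi,x-s\rangle\ge d_S(x)$. Combined with $\|\xi\|\le 1$, which follows from the $1$-Lipschitzianity of $d_S$, the Cauchy--Schwarz inequality forces $\|\xi\|=1$ and equality in the inner product, giving $\xi=(x-s)/\|x-s\|$, the desired form.

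For the limiting subdifferential, take $\xi\in\partial_L d_S(x)$ and write $\xi=\lim \xi_n$ with $\xi_n\in\partial_P d_S(x_n)$ and $x_n\to x$. Since $x\notin S$ and $S$ is closed, continuity of $d_S$ gives $d_S(x_n)>0$ eventually, so $x_n\notin S$ and the proximal step produces $\xi_n=(x_n-s_n)/d_S(x_n)$ with $s_n\in\Pi_S(x_n)$. The bound $\|s_n\|\le \|x_n\|+d_S(x_n)$ shows $(s_n)$ is bounded, so along a subsequence $s_n\to s\in S$. Continuity of $d_S$ yields $\|x-s\|=\lim\|x_n-s_n\|=\lim d_S(x_n)=d_S(x)$, so $s\in\Pi_S(x)$; passing to the limit in the expression for $\xi_n$ gives $\xi=(x-s)/d_S(x)$.

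The Clarke part is immediate from the identity $\partial_C d_S(x)=\overline{\text{co}}\,\partial_L d_S(x)$, which holds because $d_S$ is Lipschitz and hence $\partial_\infty d_S(x)=\{\theta\}$, combined with the inclusion just established for $\partial_L d_S(x)$. The one mildly delicate point is the verification that the limit $s$ belongs to $\Pi_S(x)$ in the limiting step; it is routine, but it crucially uses the continuity of $d_S$ together with the defining identity $\|x_n-s_n\|=d_S(x_n)$ for the chosen projections.
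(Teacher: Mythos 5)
Your proof is correct. The paper does not actually prove this lemma --- it is quoted from \cite[Lemma A.1]{AHT1} --- and your argument is precisely the standard proof of that cited result: at the proximal level, the one-sided estimate $d_S(x+t(s-x))\leq (1-t)d_S(x)$ along the segment toward a projection $s\in\Pi_S(x)$ gives $\langle \xi, x-s\rangle\geq d_S(x)$, which together with the Lipschitz bound $\|\xi\|\leq 1$ forces $\xi=(x-s)/d_S(x)$; the limiting case then follows by the compactness of the projections $s_{n}$ and the continuity of $d_S$ (your check that the limit point lies in $\Pi_S(x)$ is exactly the right thing to verify), and the Clarke statement is the convexification $\partial_C d_S(x)=\overline{\mathrm{co}}\,\partial_L d_S(x)$ valid for Lipschitz functions.
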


\begin{lemma}
\label{l.4.2}Let $\varphi :\mathbb{R}^{n}\rightarrow \mathbb{R}$ be an $l$%
-Lipschitz function. Then for every $x\in \mathbb{R}^{n}$ we have 
\begin{equation*}
\varphi (x+v)\leq \varphi (x)+\varphi ^{0}(x;v)+o(\left\Vert v\right\Vert ),%
\text{ }v\in \mathbb{R}^{n}.
\end{equation*}
\end{lemma}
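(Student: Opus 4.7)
The plan is to combine Lebourg's mean value theorem with the upper semicontinuity of the Clarke subdifferential $\partial_{C}\varphi$, and to extract the required $o(\|v\|)$ (rather than a mere $o(1)$) from positive homogeneity of $\varphi^{0}(x;\cdot)$ and compactness of the unit sphere.

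First, for each $v\in\mathbb{R}^{n}\setminus\{\theta\}$, Lebourg's mean value theorem (applicable because $\varphi$ is locally Lipschitz) provides some $s_{v}\in(0,1)$ and some $\xi_{v}\in\partial_{C}\varphi(x+s_{v}v)$ with
\begin{equation*}
\varphi(x+v)-\varphi(x)=\langle \xi_{v},v\rangle \leq \sigma_{\partial_{C}\varphi(x+s_{v}v)}(v)=\varphi^{0}(x+s_{v}v;v).
\end{equation*}
Since $\varphi$ is $l$-Lipschitz, $\|\xi_{v}\|\leq l$ for every such $v$. It therefore suffices to show that $\varphi^{0}(x+s_{v}v;v)-\varphi^{0}(x;v)=o(\|v\|)$ as $v\to\theta$.

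To do this I would argue by sequences. Take any $v_{n}\to\theta$ with $v_{n}\neq\theta$, set $w_{n}:=v_{n}/\|v_{n}\|$, and extract a subsequence (without relabelling) along which $w_{n}\to w$ (unit vector) and $\xi_{v_{n}}\to \xi$; the second convergence is possible thanks to the bound $\|\xi_{v_{n}}\|\leq l$. Because $x+s_{v_{n}}v_{n}\to x$, the upper semicontinuity of $\partial_{C}\varphi$ (which has nonempty convex compact values for a Lipschitz $\varphi$) yields $\xi\in \partial_{C}\varphi(x)$, and hence $\langle \xi,w\rangle\leq \sigma_{\partial_{C}\varphi(x)}(w)=\varphi^{0}(x;w)$. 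Dividing the Lebourg identity by $\|v_{n}\|$ and using positive homogeneity of $\varphi^{0}(x;\cdot)$ gives
\begin{equation*}
\frac{\varphi(x+v_{n})-\varphi(x)-\varphi^{0}(x;v_{n})}{\|v_{n}\|}\leq \langle \xi_{v_{n}},w_{n}\rangle -\varphi^{0}(x;w_{n}),
\end{equation*}
whose right-hand side converges to $\langle \xi,w\rangle-\varphi^{0}(x;w)\leq 0$ (using continuity of $\varphi^{0}(x;\cdot)$, which is $l$-Lipschitz in its second argument). Since every sequence $v_{n}\to\theta$ has such a subsequence, $\limsup_{v\to\theta}\|v\|^{-1}\bigl(\varphi(x+v)-\varphi(x)-\varphi^{0}(x;v)\bigr)\leq 0$, which is the claimed $o(\|v\|)$-estimate.

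The main obstacle, and the reason the lemma is nontrivial, is upgrading the \emph{pointwise} upper semicontinuity $\limsup_{y\to x}\varphi^{0}(y;v)\leq \varphi^{0}(x;v)$ (for fixed $v$) to something valid when the direction $v$ itself moves to $\theta$; this is exactly where the sequential argument via compactness of the unit sphere and upper semicontinuity of $\partial_{C}\varphi$ is used. Everything else — Lebourg, the Lipschitz bound $\|\xi_{v}\|\leq l$, the identity $\varphi^{0}(y;v)=\sigma_{\partial_{C}\varphi(y)}(v)$, and positive homogeneity — is invoked as a standard black box.
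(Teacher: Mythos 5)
Your proof is correct, but it takes a genuinely different route from the paper's. You invoke Lebourg's mean value theorem to write $\varphi(x+v)-\varphi(x)=\langle \xi_{v},v\rangle$ with $\xi_{v}\in\partial_{C}\varphi(x+s_{v}v)$, and then pass to the limit using the bound $\|\xi_{v}\|\leq l$, the closedness of the graph of $\partial_{C}\varphi$, and the identity $\varphi^{0}(x;\cdot)=\sigma_{\partial_{C}\varphi(x)}(\cdot)$; compactness of the unit sphere handles the fact that the direction $v_{n}/\|v_{n}\|$ moves. The paper instead argues by contradiction directly from the definition of $\varphi^{0}$ as a $\limsup$ of difference quotients: assuming $\varphi(x+v_{n})-\varphi(x)>\varphi^{0}(x;v_{n})+\alpha\|v_{n}\|$ with $v_{n}/\|v_{n}\|\rightarrow v$, it decomposes $v_{n}=(v_{n}-\|v_{n}\|v)+\|v_{n}\|v$, absorbs the first piece via the $l$-Lipschitz bound, and recognizes $\|v_{n}\|^{-1}\bigl(\varphi(y_{n}+\|v_{n}\|v)-\varphi(y_{n})\bigr)$ with $y_{n}:=x+v_{n}-\|v_{n}\|v\rightarrow x$ as one of the quotients whose $\limsup$ is exactly $\varphi^{0}(x;v)$, yielding the contradiction $\varphi^{0}(x;v)\geq\varphi^{0}(x;v)+\alpha$. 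The paper's argument is more elementary and self-contained (it needs nothing beyond the definition of $\varphi^{0}$, its positive homogeneity and Lipschitz continuity in the direction); yours is shorter if Lebourg's theorem and the cusco property of $\partial_{C}\varphi$ are accepted as black boxes, and it makes transparent where the subgradient information enters. One cosmetic remark: you announce that it suffices to show $\varphi^{0}(x+s_{v}v;v)-\varphi^{0}(x;v)=o(\|v\|)$, but your actual limit argument works directly with $\langle\xi_{v_{n}},w_{n}\rangle-\varphi^{0}(x;w_{n})$ rather than with $\varphi^{0}(x+s_{v_{n}}v_{n};\cdot)$; this is harmless, but you could drop the intermediate reformulation.
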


\begin{proof}
We proceed by contradiction and suppose that for some\ $\alpha >0$ and
sequence $(v_{n})_{n}\subset \mathbb{R}^{n}\setminus \{\theta \}$ converging
to $\theta $ it holds 
\begin{equation}
\varphi (x+v_{n})-\varphi (x)>\varphi ^{0}(x;v_{n})+\alpha \left\Vert
v_{n}\right\Vert \text{ \ for all }n\geq 1.  \label{l.401}
\end{equation}%
Without loss of generality, we can assume that $\frac{v_{n}}{\left\Vert
v_{n}\right\Vert }\rightarrow v\neq \theta .$ Then 
\begin{equation*}
\begin{split}
\varphi (x+v_{n})-\varphi (x)=& \varphi \big(x+v_{n}-\left\Vert
v_{n}\right\Vert v+\left\Vert v_{n}\right\Vert v\big)-\varphi
(x+v_{n}-\left\Vert v_{n}\right\Vert v) \\
& +\varphi (x+v_{n}-\left\Vert v_{n}\right\Vert v)-\varphi (x) \\
\leq & \varphi \big(x+v_{n}-\left\Vert v_{n}\right\Vert v+\left\Vert
v_{n}\right\Vert v\big)-\varphi (x+v_{n}-\left\Vert v_{n}\right\Vert v) \\
& +l\left\Vert (v_{n}-\left\Vert v_{n}\right\Vert v)\right\Vert .
\end{split}%
\end{equation*}%
Hence, from inequality $(\ref{l.401})$ one gets\ 
\begin{equation*}
\frac{\varphi \big(x+v_{n}-\left\Vert v_{n}\right\Vert v+\left\Vert
v_{n}\right\Vert v\big)-\varphi (x+v_{n}-\left\Vert v_{n}\right\Vert v)}{%
\left\Vert v_{n}\right\Vert }+l\left\Vert \frac{v_{n}}{\left\Vert
v_{n}\right\Vert }-v\right\Vert \geq \varphi ^{0}(x;\frac{v_{n}}{\left\Vert
v_{n}\right\Vert })+\alpha ,
\end{equation*}%
which as $n\rightarrow \infty $ leads us to the contradiction $\varphi
^{0}(x;v)\geq \varphi ^{0}(x;v)+\alpha >\varphi ^{0}(x;v).$
\end{proof}

Before we state the main strong invariance theorem we give the following
result:

\begin{proposition}
\label{locally strong invariant}Let $S\subset \overline{\normalfont{%
\text{dom$A$}}}$ satisfy condition\ $(\ref{cond.}),$ and take\ $x_{0}\in S.$
If there is some $\rho >0$ such that for any $x\in \mathrm{B}(x_{0},\rho
)\cap S\cap \normalfont{\text{dom$A$}},$ 
\begin{equation}
\underset{\xi \in \mathrm{N}_{S}^{P}(x)}{\sup }\,\,\underset{v\in F(x)}{\sup 
}\,\,\underset{x^{\ast }\in A(x)}{\inf }\langle \xi ,v-x^{\ast }\rangle \leq
0,  \label{401}
\end{equation}%
then given\ any solution $x(\cdot ;x_{0})$ of $(\ref{ieq})$, there exists $%
T>0$ such that $x(t;x_{0})\in S$ for every $t\in \lbrack 0,T].$
\end{proposition}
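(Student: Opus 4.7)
The plan is to show that $\varphi(t):=\tfrac12 d_S^2(x(t))$ stays at zero on a short interval by deriving a Gronwall-type inequality $\dot\varphi(t)\le 2L\varphi(t)$ almost everywhere. The three ingredients that power this estimate are the Lipschitz property of $F$, the monotonicity of $A$, and the infinitesimal tangency condition $(\ref{401})$ evaluated at a projection of $x(t)$ onto $S$.

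First, Proposition~\ref{pro3.2}(ii) supplies a $T>0$ such that $x([0,T])\subset \mathrm{B}(x_0,\rho/2)$. Since for any $y\in \Pi_S(x(t))$ we have $\|y-x_0\|\le 2\|x(t)-x_0\|\le \rho$, and since condition $(\ref{cond.})$ forces $\Pi_S(x(t))\subset S\cap \mathrm{dom}\,A$, every point at which we shall invoke $(\ref{401})$ will lie in $\mathrm{B}(x_0,\rho)\cap S\cap \mathrm{dom}\,A$. By Proposition~\ref{pro3.1}, there exist $f\in L^\infty(0,T;\mathbb{R}^n)$ with $f(t)\in F(x(t))$ and a selection $g(t)\in A(x(t))$ such that $\dot x(t)=f(t)-g(t)$ almost everywhere on $[0,T]$.

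Next, $\varphi$ is absolutely continuous because $d_S^2$ is locally Lipschitz and $x(\cdot)$ is absolutely continuous on $[0,T]$. At any $t$ where $\dot x(t)$ exists, choosing any $y(t)\in \Pi_S(x(t))$ and using the projection inequality $d_S^2(x(t+h))\le \|x(t+h)-y(t)\|^2$ yields the pointwise bound
\[
\dot\varphi(t)\le \langle \xi(t),\dot x(t)\rangle,\qquad \xi(t):=x(t)-y(t)\in \mathrm{N}_S^P(y(t)),\ \|\xi(t)\|=d_S(x(t)).
\]
Given an arbitrary $\varepsilon>0$, the Lipschitz property of $F$ provides $v(t)\in F(y(t))$ with $\|f(t)-v(t)\|\le L\,d_S(x(t))$, and then $(\ref{401})$ applied at $y(t)$ with this particular $\xi(t)$ and $v(t)$ furnishes some $x^*(t)\in A(y(t))$ with $\langle \xi(t),v(t)-x^*(t)\rangle\le \varepsilon$. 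Splitting
\[
\langle \xi(t),f(t)-g(t)\rangle=\langle \xi(t),f(t)-v(t)\rangle+\langle \xi(t),v(t)-x^*(t)\rangle+\langle \xi(t),x^*(t)-g(t)\rangle,
\]
the first term is at most $L\,d_S^2(x(t))=2L\varphi(t)$, the second at most $\varepsilon$, and the third is non-positive by monotonicity of $A$ applied to the graph points $(x(t),g(t))$ and $(y(t),x^*(t))$. Letting $\varepsilon\downarrow 0$ gives $\dot\varphi(t)\le 2L\varphi(t)$ almost everywhere on $[0,T]$.

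Finally, since $\varphi(0)=0$ (as $x_0\in S$), Lemma~\ref{Gronwall} with $\alpha=0$, $a\equiv 2L$ and $b\equiv 0$ forces $\varphi\equiv 0$ on $[0,T]$, so that $x(t)\in S$ throughout. The delicate point is precisely the three-term decomposition above: $F$ is handled by Lipschitz matching at the projection $y(t)\in S$, $A$ is handled by monotonicity between the graph points $(x(t),g(t))$ and $(y(t),x^*(t))$, and the remaining cross-term is controlled through $(\ref{401})$; because this last condition provides only an infimum and not a minimum, the arbitrary $\varepsilon>0$ must be introduced and then sent to zero only after the differential inequality has been secured.
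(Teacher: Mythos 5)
Your argument is essentially the paper's own: bound the derivative of the squared distance $d_S^2(x(t))$ via the three-term decomposition at a projection point, using the Lipschitz property of $F$, the monotonicity of $A$, and hypothesis~(\ref{401}), and then close with Gronwall. Two of your choices are in fact slightly cleaner than the paper's: you obtain $\dot\varphi(t)\le\langle x(t)-y(t),\dot x(t)\rangle$ directly from the elementary inequality $d_S^2(x(t+h))\le\|x(t+h)-y(t)\|^2$ rather than through the generalized directional derivative of $d_S$ (the paper's Lemmas~\ref{l.4.1} and~\ref{l.4.2}), and you correctly treat the infimum in~(\ref{401}) as possibly unattained by introducing $\varepsilon>0$ and sending it to $0$ pointwise in $t$, a point the paper glosses over by asserting the existence of $w'\in A(u)$ with $\langle x(t)-u,v'-w'\rangle\le 0$.

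There is one step you should repair. You assert that $\varphi$ is absolutely continuous on $[0,T]$ because $x(\cdot)$ is; but by the paper's definition a solution is only guaranteed to be absolutely continuous on compact subsets of $(0,+\infty)$, and when $x_0\in\overline{\mathrm{dom}\,A}\setminus\mathrm{dom}\,A$ (which is allowed here, since $S\subset\overline{\mathrm{dom}\,A}$) the trajectory need not be absolutely continuous up to $t=0$. Consequently you cannot apply Lemma~\ref{Gronwall} on $[0,T]$ with initial value $\varphi(0)=0$ directly. The standard fix, which is exactly what the paper does, is to apply Gronwall on $[\varepsilon,T]$ to get $\varphi(t)\le\varphi(\varepsilon)e^{2L(t-\varepsilon)}$ and then let $\varepsilon\downarrow 0$, using the continuity of $x(\cdot)$ at $0$ and $\varphi(0)=0$ to conclude $\varphi\equiv 0$ on $[0,T]$. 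With that adjustment (and noting that the localization $x([0,T])\subset\mathrm{B}(x_0,\rho/2)$ really only needs continuity of the solution at $0$, since Proposition~\ref{pro3.2}(ii) as stated assumes $x_0\in\mathrm{dom}\,A$), your proof is complete.
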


\begin{proof}
Let $x(\cdot ):=x(\cdot ;x_{0})$ be any solution of differential inclusion $(%
\ref{ieq})$, so that for some $T_{1}>0$ we have 
\begin{equation}
x(t)\in \mathrm{B}(x_{0},\frac{\rho }{3})\cap \text{dom}A,\,\,\text{a.e.}%
\,\,t\in \lbrack 0,T_{1}],  \label{402b}
\end{equation}%
where $\rho >0$ is as in the current assumption, and so (by condition\ $(\ref%
{cond.})$) 
\begin{equation}
\Pi _{S}(x(t))\subset \mathrm{B}(x_{0},\frac{2}{3}\rho )\cap S\cap \text{dom}%
A\subset \mathrm{B}(x_{0},\rho )\cap S\cap \text{dom}A\text{ \ for a.e. }%
t\in (0,T_{1}].  \label{402c}
\end{equation}%
We denote the function $\eta :[0,T_{1}]\rightarrow \mathbb{R}$ as 
\begin{equation*}
\eta (t):=d_{S}^{2}(x(t)).
\end{equation*}%
Fix $\varepsilon >0.$ Since the function $d_{S}^{2}(\cdot )$ is Lipschitz on
each bounded set and $x(\cdot )$ is absolutely continuous on $\left[
\varepsilon ,T_{1}\right] $, function $\eta $ is also absolutely continuous
on $[\varepsilon ,T_{1}];$ hence, differentiable on a set $T_{0}\subset
\lbrack \varepsilon ,T_{1}]$ of full measure (we may also suppose that (\ref%
{402c}) holds for all $t\in T_{0}$). We pick $t\in T_{0}$ so that, according
to Lemma \ref{l.4.2}, for all $s>0$%
\begin{equation}
\begin{split}
d_{S}^{2}(x(t+s))& =d_{S}^{2}(x(t)+\dot{x}(t)s+o(s)) \\
& \leq d_{S}^{2}(x(t)+\dot{x}(t)s)+o(s) \\
& \leq \Big(d_{S}(x(t))+sd_{S}^{0}(x(t);\dot{x}(t))+o(s)\Big)^{2}+o(s) \\
& \leq d_{S}^{2}(x(t))+2d_{S}(x(t))d_{S}^{0}(x(t);\dot{x}(t)s)+o(s),
\end{split}
\label{402}
\end{equation}%
While by Lemma $\ref{l.4.1}$\ we have 
\begin{eqnarray}
d_{S}(x(t))d_{S}^{0}(x(t);\dot{x}(t)) &=&d_{S}(x(t))\underset{\xi \in
\partial _{C}d(x(t))}{\max }\langle \xi ,\dot{x}(t)\rangle  \label{402e} \\
&\leq &\underset{u\in \Pi _{S}(x(t))}{\max }\langle x(t)-u,\dot{x}(t)\rangle
.  \notag
\end{eqnarray}%
Let us write $\dot{x}(t)$ as $\dot{x}(t)=v-w$ for some $v\in F(x(t))$ and $%
w\in A(x(t))$, and fix $u\in \Pi _{S}(x(t))$ $(\subset \mathrm{B}(x_{0},\rho
)\cap S\cap \text{dom}A$ by (\ref{402c})$).$ By the Lipschitzianity of $F$
we choose some $v^{\prime }\in F(u)$ such that 
\begin{equation*}
\left\Vert v-v^{\prime }\right\Vert \leq L\left\Vert x(t)-u\right\Vert
=Ld_{S}(x(t)).
\end{equation*}%
Since $x(t)-u\in \mathrm{N}_{S}^{P}(u),$ by the current hypothesis of the
theorem there exist $w^{\prime }\in A(u)$ such that 
\begin{equation*}
\langle x(t)-u,v^{\prime }-w^{\prime }\rangle \leq 0,
\end{equation*}%
which in turn yields, due to the monotonicity of $A$, 
\begin{equation*}
\begin{split}
\langle x(t)-u,\dot{x}(t)\rangle & =\langle x(t)-u,v-w\rangle \\
& =\langle x(t)-u,v-v^{\prime }\rangle +\langle x(t)-u,v^{\prime }-w^{\prime
}\rangle \\
& \,\,\,\,+\langle x(t)-u,w^{\prime }-w\rangle \\
& \leq L\left\Vert x(t)-u\right\Vert ^{2}=Ld_{S}^{2}(x(t)).
\end{split}%
\end{equation*}%
Thus, continuing with $(\ref{402})$ and $(\ref{402e})$ we arrive at 
\begin{equation*}
\eta (t+s)\leq \eta (t)+2L\eta (t)s+o(\left\Vert s\right\Vert ),
\end{equation*}%
which implies that $\dot{\eta}(t)\leq 2L\eta (t).$ Hence, by the Gronwall
Lemma, we obtain that $\eta (t)\leq \eta (\varepsilon )e^{2L(t-\varepsilon
)} $ for all $t\in T_{0},$ or, equivalently, $\eta (t)\leq \eta (\varepsilon
)e^{2L(t-\varepsilon )}$ for all $t\in \lbrack \varepsilon ,T_{1}].$ Then,
as $\varepsilon $ goes to $0$ we conclude that $\eta (t)=0$ for all $t\in
\lbrack 0,T_{1}],$ which proves that $x(t)\in S$ for all $t\in \lbrack
0,T_{1}]$.
\end{proof}

We give the required characterization of strong invariant closed sets with
respect to differential inclusion $(\ref{ieq}).$

\begin{theorem}
\label{strong invariant}Let $S$ be a closed subset of $\overline{%
\normalfont{\text{dom}A}}$ satisfying\ relation\ $(\ref{cond.})$. Then the
following statements are equivalent, provided that $\mathrm{N}_{S}=\mathrm{N}%
_{S}^{P}$ or $\mathrm{N}_{S}^{F}$ and $\mathrm{T}_{S}=\mathrm{T}_{S}^{B},$
or $\mathrm{T}_{S}=\overline{\normalfont{\text{co}}}\mathrm{T}_{S}^{B}.$

(i) $S$ is strong invariant for differential inclusion $(\ref{ieq}).$

(ii) For every $x\in S\cap \normalfont{\text{dom$A$}}$, one has 
\begin{equation}
v-\Pi _{A(x)}(v)\in \mathrm{T}_{S}(x)\,\,\,\forall v\in F(x).
\label{c.si.401}
\end{equation}

(iii) For every $x\in S\cap \normalfont{\text{dom$A$}}$, one has 
\begin{equation}
\lbrack v-A(x)]\cap \mathrm{T}_{S}(x)\neq \emptyset \,\,\,\forall v\in F(x).
\label{c.si.402}
\end{equation}

(iv) For every $x\in S\cap \normalfont{\text{dom$A$}}$, one has 
\begin{equation}
\underset{\xi \in \mathrm{N}_{S}(x)}{\sup }\,\,\,\underset{v\in F(x)}{\sup }%
\langle \xi ,v-\Pi _{A(x)}(v)\rangle \leq 0.  \label{c.si.403}
\end{equation}

(v) For every $x\in S\cap \normalfont{\text{dom$A$}}$, one has 
\begin{equation}
\underset{\xi \in \mathrm{N}_{S}(x)}{\sup }\,\,\,\underset{v\in F(x)}{\sup }%
\,\,\,\underset{x^{\ast }\in A(x)}{\inf }\langle \xi ,v-x^{\ast }\rangle
\leq 0.  \label{c.si.404}
\end{equation}

(vi) For every $x\in S\cap \normalfont{\text{dom$A$}}$, one has 
\begin{equation}
\underset{\xi \in \mathrm{N}_{S}(x)}{\sup }\,\,\,\underset{v\in F(x)}{\sup }%
\,\,\,\underset{x^{\ast }\in A(x)\cap \mathrm{B}_{\left\Vert F(x)\right\Vert
+\left\Vert A^{\circ }(x)\right\Vert }}{\inf }\langle \xi ,v-x^{\ast
}\rangle \leq 0.  \label{t.si.405}
\end{equation}
\end{theorem}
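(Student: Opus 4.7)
The plan is to establish the cyclic chain $(\mathrm{i}) \Rightarrow (\mathrm{ii}) \Rightarrow \{(\mathrm{iii}),(\mathrm{iv}),(\mathrm{vi})\} \Rightarrow (\mathrm{v}) \Rightarrow (\mathrm{i})$. The two substantive steps are $(\mathrm{i}) \Rightarrow (\mathrm{ii})$, which invokes the converse part of Proposition \ref{pro3.2}(i) to realize prescribed right-derivatives of solutions, and $(\mathrm{v}) \Rightarrow (\mathrm{i})$, which globalizes the local invariance result of Proposition \ref{locally strong invariant}; every remaining implication will be a direct verification.

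For $(\mathrm{i}) \Rightarrow (\mathrm{ii})$, fix $x \in S \cap \text{dom}A$ and $v \in F(x)$. Since $v - \Pi_{A(x)}(v) \in [F(x) - A(x)]^{\circ}$, the converse part of Proposition \ref{pro3.2}(i) produces a solution $y(\cdot)$ of $(\ref{ieq})$ satisfying $y(0) = x$ and $\tfrac{d^{+}y(0)}{dt} = v - \Pi_{A(x)}(v)$. Strong invariance forces $y(t) \in S$ for every $t \geq 0$, and the definition of the Bouligand tangent cone then gives $v - \Pi_{A(x)}(v) \in \mathrm{T}_{S}^{B}(x) \subset \mathrm{T}_{S}(x)$ for either admissible choice of $\mathrm{T}_{S}$.

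From (ii) the other target conditions follow directly. Condition (iii) is immediate since $\Pi_{A(x)}(v) \in A(x)$. Condition (iv) reduces to the polarity inequality $\langle \xi, w \rangle \leq 0$ for $\xi \in \mathrm{N}_{S}^{P}(x)$ or $\mathrm{N}_{S}^{F}(x)$ and $w \in \mathrm{T}_{S}^{B}(x)$, extended by linearity and continuity to $\overline{\text{co}}\,\mathrm{T}_{S}^{B}(x)$, applied with $w = v - \Pi_{A(x)}(v)$. For (vi), non-expansiveness of the metric projection onto $A(x)$ yields
\begin{equation*}
\|\Pi_{A(x)}(v)\| \leq \|A^{\circ}(x)\| + \|\Pi_{A(x)}(v) - \Pi_{A(x)}(\theta)\| \leq \|A^{\circ}(x)\| + \|v\| \leq \|A^{\circ}(x)\| + \|F(x)\|,
\end{equation*}
so $\Pi_{A(x)}(v)$ is an admissible choice of $x^{*}$ in the constrained infimum and produces the required non-positive inner product. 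The implications $(\mathrm{iii}) \Rightarrow (\mathrm{v})$, $(\mathrm{iv}) \Rightarrow (\mathrm{v})$, and $(\mathrm{vi}) \Rightarrow (\mathrm{v})$ are immediate: in (iii) any witness $w \in A(x)$ with $v - w \in \mathrm{T}_{S}(x)$ gives $\inf_{x^{*} \in A(x)} \langle \xi, v - x^{*}\rangle \leq \langle \xi, v - w\rangle \leq 0$; (iv) is a special case of (v) via $x^{*} = \Pi_{A(x)}(v)$; and (vi) only enlarges the feasible set over which the infimum is computed.

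For $(\mathrm{v}) \Rightarrow (\mathrm{i})$, note that $\mathrm{N}_{S}^{P} \subset \mathrm{N}_{S}^{F}$, so either admissible choice of $\mathrm{N}_{S}$ in (v) implies hypothesis $(\ref{401})$ of Proposition \ref{locally strong invariant} at every $x \in S \cap \text{dom}A$. Given $x_{0} \in S \cap \overline{\text{dom}A}$ and any solution $x(\cdot)$ of $(\ref{ieq})$, set $T^{*} := \sup\{T \geq 0 : x(t) \in S \text{ for all } t \in [0, T]\}$. Proposition \ref{locally strong invariant} applied at $x_{0}$ gives $T^{*} > 0$; if $T^{*} < +\infty$, closedness of $S$ and continuity of $x(\cdot)$ give $x(T^{*}) \in S$, while Proposition \ref{pro3.2}(i) gives $x(T^{*}) \in \text{dom}A$. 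Reapplying Proposition \ref{locally strong invariant} at $x(T^{*})$ to the shifted solution $t \mapsto x(T^{*} + t)$ extends the containment past $T^{*}$, contradicting maximality, so $T^{*} = +\infty$. The main difficulty in the whole argument is the step $(\mathrm{i}) \Rightarrow (\mathrm{ii})$: one must exhibit a solution whose right derivative at the initial time equals the specific vector $v - \Pi_{A(x)}(v)$ for an arbitrarily chosen $v \in F(x)$, which is precisely what the converse half of Proposition \ref{pro3.2}(i), and through it the Lipschitz-selection Lemma \ref{l.3.1}, was set up to supply.
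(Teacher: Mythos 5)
Your proof is correct and follows essentially the same route as the paper's: the only substantive implications are $(i)\Rightarrow(ii)$ via the converse half of Proposition \ref{pro3.2}(i) (itself built on the Lipschitz-selection Lemma \ref{l.3.1}) and $(v)\Rightarrow(i)$ via Proposition \ref{locally strong invariant}, with the remaining arrows handled by the tangent--normal polarity $\mathrm{T}_{S}(x)\subset(\mathrm{N}_{S}^{F}(x))^{\ast}$ and the projection bound $\Vert\Pi_{A(x)}(v)\Vert\leq\Vert F(x)\Vert+\Vert A^{\circ}(x)\Vert$, exactly as in the paper. The only cosmetic differences are that you derive $(vi)$ directly from $(ii)$ rather than from $(iv)$, and that you spell out the globalization of the local invariance in $(v)\Rightarrow(i)$, which the paper leaves implicit.
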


\begin{proof}
The implication $(ii)\Rightarrow (iii)$ and $\ (vi)\Rightarrow (v)$ are
trivial, while the implications $(ii)\Rightarrow (iv)$ and $(iii)\Rightarrow
(v)$ come from the relation $\mathrm{T}_{S}(x)\subset (\mathrm{N}%
_{S}^{F}(x))^{\ast }$ for all $x\in S.$ The implications $(v)$ $($with $%
\mathrm{N}_{S}=\mathrm{N}_{S}^{P})\Rightarrow (i)$ is a direct consequence
of Proposition $\ref{locally strong invariant}$.

$(i)\Rightarrow (ii).$ To prove this implication we suppose that $S$ is
strong invariant and take $x_{0}\in S\cap \text{dom}A$ and $v\in F(x_{0}).$
According to Lemma \ref{l.3.1}, there exists a Lipschitz selection $f$ of $F$
such that $f(x_{0})=v,$ and so there is a unique solution $x(\cdot )$ of the
following differential inclusion, 
\begin{equation*}
\dot{x}(t)\in f(x(t))-A(x(t)),\,\,\text{a.e.}\,\,t\geq 0,\,\,x(0)=x_{0}.
\end{equation*}%
It follows that\ $x(\cdot )$ is also a solution of differential inclusion $(%
\ref{ieq}),$ so that $x(t)\in S$ for any $t\geq 0$. Then we get 
\begin{equation*}
v-\Pi _{A(x_{0})}(v)=(f(x_{0})-A(x_{0}))^{\circ }=\frac{d^{+}x(0)}{dt}=%
\underset{t\downarrow 0}{\lim }\frac{x(t)-x_{0}}{t}\in \mathrm{T}%
_{S}^{B}(x_{0})\subset \mathrm{T}_{S}(x_{0}).
\end{equation*}%
$(iv)\Rightarrow (vi).$ This implication holds since for any $x\in \text{dom}%
A$ and $v\in F(x)$ we\ have that 
\begin{equation*}
\begin{split}
\left\Vert \Pi _{A(x)}(v)\right\Vert & \leq \left\Vert \Pi
_{A(x)}(v)-A^{\circ }(x)\right\Vert +\left\Vert A^{\circ }(x)\right\Vert \\
& =\left\Vert \Pi _{A(x)}(v)-\Pi _{A(x)}(\theta )\right\Vert +\left\Vert
A^{\circ }(x)\right\Vert \\
& \leq \left\Vert v\right\Vert +\left\Vert A^{\circ }(x)\right\Vert \leq
\left\Vert F(x)\right\Vert +\left\Vert A^{\circ }(x)\right\Vert .
\end{split}%
\end{equation*}%
The proof of the theorem is complete.
\end{proof}

The following proposition, which provides the counterpart of Proposition $%
\ref{locally strong invariant}$ for the weak invariance, is essentially
given in \cite[Theorem 1]{DRW}. The specification of the interval on which
the solution remains in $S$ also comes from the proof given\ in that paper.

\begin{proposition}
\label{locally weak invariant}Let $S\subset \normalfont{\text{dom$A$}}$ be
closed and take\ $x_{0}\in S$ such that, for some\ $r,m>0$,\ 
\begin{equation}
||A^{\circ}(x)||\leq m\,\,\,\forall x\in S\cap \mathrm{B}(x_{0},r).
\label{t.lwi.401}
\end{equation}%
Assume that for all $x\in S\cap \mathrm{B}(x_{0},r)$, 
\begin{equation}
\underset{\xi \in \mathrm{N}_{S}(x)}{\sup }\,\,\underset{v\in F(x)}{\inf }%
\,\,\underset{x^{\ast }\in A(x)\cap \mathrm{B}_{m+\left\Vert F(x)\right\Vert
}}{\inf }\langle \xi ,v-x^{\ast }\rangle \leq 0.  \label{t.lwi.402}
\end{equation}%
Then there exists a solution $x(\cdot ;x_{0})$ of $(\ref{ieq})$ such that $%
x(t;x_{0})\in S$ for every $t\in \lbrack 0,T]$ with $T=\frac{r}{3}\Big(m+%
\underset{x\in \mathrm{B}(x_{0},r)\cap S}{\sup }\left\Vert F(x)\right\Vert %
\Big)^{-1}. $
\end{proposition}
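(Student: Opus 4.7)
The plan is to construct approximate solutions by an Euler-type discretization that uses the inf-condition \eqref{t.lwi.402} at each step to choose a velocity pointing ``towards'' $S$, then to extract a uniform limit and identify it as a solution lying in $S$. Set $M := m + \sup_{x \in B(x_0,r)\cap S} \|F(x)\|$, so $T = r/(3M)$. For each integer $n \geq 1$, partition $[0,T]$ into subintervals of length $\delta_n = T/n$ with nodes $t_i^n = i\delta_n$, and build $x_n(\cdot)$ inductively: set $x_n(t_0^n) = x_0$; having $x_n(t_i^n)$ with $d_S(x_n(t_i^n))$ small, pick $p_i \in \Pi_S(x_n(t_i^n))$ so that $p_i \in S \cap B(x_0,r)$, apply \eqref{t.lwi.402} at $p_i$ with the proximal normal $\xi_i := x_n(t_i^n) - p_i \in N_S^P(p_i)$ to obtain $v_i \in F(p_i)$ and $x_i^* \in A(p_i) \cap B_{m + \|F(p_i)\|}$ with $\langle \xi_i, v_i - x_i^*\rangle \leq 0$; then define $x_n$ affinely on $[t_i^n, t_{i+1}^n]$ with slope $v_i - x_i^*$.

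The velocity bound $\|v_i - x_i^*\| \leq \|F(p_i)\| + m + \|F(p_i)\| \leq 3M$ (using $\|A^\circ\| \leq m$ on $S \cap B(x_0,r)$) controls the total excursion: $\|x_n(t) - x_0\| \leq 3Mt \leq r$ on $[0,T]$, so every $p_i$ constructed remains in $S \cap B(x_0,r)$ and the construction goes through. Moreover, a direct computation using $\langle \xi_i, v_i - x_i^*\rangle \leq 0$ gives
$$d_S^2(x_n(t_{i+1}^n)) \leq \|\xi_i + \delta_n(v_i - x_i^*)\|^2 \leq d_S^2(x_n(t_i^n)) + C\delta_n^2,$$
so $\max_{i \leq n} d_S(x_n(t_i^n)) \to 0$, and by the Lipschitz bound on $x_n$ we obtain $d_S(x_n(t)) \to 0$ uniformly on $[0,T]$.

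By Arzelà–Ascoli (the family $\{x_n\}$ is uniformly bounded and $3M$-Lipschitz), a subsequence converges uniformly to some $3M$-Lipschitz $x(\cdot)$ with $x(0) = x_0$ and $x(t) \in S$ for all $t \in [0,T]$ (by closedness of $S$). Writing $\dot{x}_n = f_n - g_n$ with $f_n(t) = v_i \in F(p_i)$ and $g_n(t) = x_i^* \in A(p_i)$ on $(t_i^n, t_{i+1}^n)$, both sequences are bounded in $L^\infty(0,T;\mathbb R^n)$, so extract weak-$\ast$ limits $f, g$. Since $F$ is Lipschitz Cusco and $p_i - x(t) \to 0$, a standard Mazur/convex-combination argument yields $f(t) \in F(x(t))$ a.e.; and since $A$ is maximal monotone (hence its graph is closed and convex in the appropriate sense for weak limits, via e.g.\ Brezis's lemma on $\liminf \int \langle g_n, p_n\rangle$), one gets $g(t) \in A(x(t))$ a.e. Thus $\dot{x}(t) = f(t) - g(t) \in F(x(t)) - A(x(t))$ a.e., and $x(\cdot)$ is the required solution.

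The principal obstacle is the passage to the limit in the inclusion $g_n(t) \in A(p_i^n)$: although $\|g_n\|_\infty \leq 2M$ is uniform, the maximal monotone operator $A$ is multivalued and one only has weak-$\ast$ convergence of $g_n$, so one must invoke the graph-closure property of $A$ for weak/strong pairs (available because the $p_i^n$ converge strongly to $x(t)$ while $g_n$ converges weakly), and combine this with the Lipschitz Cusco upper semicontinuity of $F$ to handle $f_n$; controlling the difference between evaluating at $p_i^n$ versus $x_n(t_i^n)$ is also routine but must be done carefully using $d_S(x_n(t_i^n)) \to 0$.
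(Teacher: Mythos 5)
Your proposal is correct in substance, but note that the paper does not actually prove this proposition: it imports it from \cite[Theorem 1]{DRW}, remarking only that the length $T$ of the invariance interval can be read off from the proof given there. What you have written is, in effect, a self-contained reconstruction of that proof adapted to the present setting: the proximal-aiming Euler scheme, the one-step estimate $d_S^2(x_n(t_{i+1}^n))\le d_S^2(x_n(t_i^n))+C\delta_n^2$ driven by hypothesis (\ref{t.lwi.402}) applied at the projections $p_i$ with $\xi_i=x_n(t_i^n)-p_i\in\mathrm{N}_{S}^{P}(p_i)$, Arzel\`a--Ascoli, and identification of the limit. The extra work relative to the purely Cusco case is exactly where you locate it: one must split $\dot x_n=f_n-g_n$ and pass to the limit in $g_n(t)\in A(p^n(t))$ with $g_n$ only weak-$*$ convergent; the standard monotonicity argument (test against an arbitrary $(y,z)\in\mathrm{graph}\,A$, integrate against a nonnegative $\varphi$, pair the strong convergence $p^n\to x$ with the weak convergence of $g_n$, then invoke maximality over a countable dense subset of the graph) closes this, and the uniform bound $\|x_i^{\ast}\|\le m+\|F(p_i)\|$ coming from (\ref{t.lwi.401}) is what makes $g_n$ bounded so that the argument can even start. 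Two small points to tighten: the speed bound is really $\|v_i-x_i^{\ast}\|\le 2\|F(p_i)\|+m\le 2M$, not merely $\le 3M$, and you need this sharper constant, because the excursion $\|x_n(t)-x_0\|\le 2Mt\le 2r/3$ plus the $O(n^{-1/2})$ projection error is what keeps every $p_i$ strictly inside $\mathrm{B}(x_0,r)$, where (\ref{t.lwi.401}) and (\ref{t.lwi.402}) are available; with the looser bound $3Mt\le r$ alone, $p_i$ could leave $\mathrm{B}(x_0,r)$ by $d_S(x_n(t_i^n))$. Finally, since the paper's notion of solution lives on $[0,\infty)$, you should extend $x(\cdot)$ beyond $T$ by concatenating with any solution issued from $x(T)\in\overline{\mathrm{dom}\,A}$, which Theorem \ref{theo.3.1} supplies.
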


Consequently, we obtain the desired characterization of weak invariant sets
with respect to differential inclusion $(\ref{ieq}).$ Recall that $A^{\circ
} $ is said to be locally bounded on $S$ if for every $x\in S$ we have 
\begin{equation}
\mathfrak{m}(x):=\underset{y\rightarrow x,y\in S}{\lim \sup }\left\Vert
A^{\circ }(y)\right\Vert <+\infty .  \label{mx0}
\end{equation}

\begin{theorem}
\label{weak invariant}Let $S\subset \normalfont{\text{dom$A$}}$ be a closed
set\ such that $A^{\circ}$ is locally bounded on $S$. Then the following
statements are equivalent provided that $\mathrm{T}_{S}$ and $\mathrm{N}_{S}$
are the same as the ones in Theorem $\ref{strong invariant}$\emph{:}

(i) $S$ is weak invariant for differential inclusion $(\ref{ieq}).$

(ii) For every $x\in S$, one has\ 
\begin{equation}
\cup _{v\in F(x)}\left[ v-A(x)\cap \mathrm{B}_{\mathfrak{m}(x)+\left\Vert
F(x)\right\Vert }\right] \cap \mathrm{T}_{S}(x)\neq \emptyset .
\label{c.wi.401}
\end{equation}

(iii) For every $x\in S$, one has 
\begin{equation}
\underset{\xi \in \mathrm{N}_{S}(x)}{\sup }\,\,\underset{v\in F(x)}{\inf }%
\,\,\underset{x^{\ast }\in A(x)\cap \mathrm{B}_{\mathfrak{m}(x)+\left\Vert
F(x)\right\Vert }}{\inf }\langle \xi ,v-x^{\ast }\rangle \leq 0.
\label{c.wi.402}
\end{equation}
\end{theorem}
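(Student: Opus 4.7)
The plan is to close the cycle $(i)\Rightarrow(ii)\Rightarrow(iii)\Rightarrow(i)$, where only the last step requires real work, the first two resting on Proposition~\ref{pro3.2}(i) and on the standard polarity between Bouligand tangents and Fr\'echet normals respectively. For $(i)\Rightarrow(ii)$, fix $x_0\in S$ (so $x_0\in\mathrm{dom}\,A$ by assumption) and, by weak invariance, choose a solution $x(\cdot)$ of (\ref{ieq}) with $x(t)\in S$ for all $t\ge 0$. Proposition~\ref{pro3.2}(i) furnishes $v\in F(x_0)$ with $\frac{d^+x(0)}{dt}=v-\Pi_{A(x_0)}(v)$, and this vector lies in $\mathrm{T}_S^B(x_0)$ because $x(t)\in S$ as $t\downarrow 0$; the bound
\[
\|\Pi_{A(x_0)}(v)\|\le\|v\|+\|A^\circ(x_0)\|\le\|F(x_0)\|+\mathfrak{m}(x_0)
\]
then places $\Pi_{A(x_0)}(v)$ in $A(x_0)\cap\mathrm{B}_{\mathfrak{m}(x_0)+\|F(x_0)\|}$, which is (ii). For $(ii)\Rightarrow(iii)$, every $w\in\mathrm{T}_S^B(x)$ satisfies $\langle\xi,w\rangle\le 0$ for all $\xi\in\mathrm{N}_S^F(x)\supset\mathrm{N}_S^P(x)$, and this survives the closed-convex-hull operation on $\mathrm{T}_S^B$; so the witness $v-x^*$ from (ii) gives
\[
\inf_{v'\in F(x)}\,\inf_{(x^*)'\in A(x)\cap\mathrm{B}_{\mathfrak{m}(x)+\|F(x)\|}}\langle\xi,v'-(x^*)'\rangle\le\langle\xi,v-x^*\rangle\le 0
\]
uniformly in $\xi\in\mathrm{N}_S(x)$, which is (iii).

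The substantive implication $(iii)\Rightarrow(i)$ proceeds by first reducing (iii) to the hypothesis of Proposition~\ref{locally weak invariant}. Given $x_0\in S$, the upper semi-continuity of the limsup $\mathfrak{m}$ and local boundedness of $A^\circ$ on $S$ let me pick $r>0$ and $m:=\mathfrak{m}(x_0)+1$ with $\|A^\circ(x)\|\le m$ and $\mathfrak{m}(x)\le m$ for every $x\in S\cap\mathrm{B}(x_0,r)$. Since enlarging the feasible set of an infimum can only decrease it, (iii) implies
\[
\sup_{\xi\in\mathrm{N}_S(x)}\inf_{v\in F(x)}\inf_{x^*\in A(x)\cap\mathrm{B}_{m+\|F(x)\|}}\langle\xi,v-x^*\rangle\le 0 \quad\text{for }x\in S\cap\mathrm{B}(x_0,r),
\]
and Proposition~\ref{locally weak invariant} produces a solution of (\ref{ieq}) starting at $x_0$ and remaining in $S$ on a positive interval $[0,T]$.

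To globalize, I would apply Zorn's lemma to the family $\mathcal{C}$ of solutions $y\colon[0,T_y]\to\mathbb{R}^n$ of (\ref{ieq}) with $y(0)=x_0$ and $y([0,T_y])\subset S$, ordered by extension; every chain admits an upper bound by pointwise concatenation, whose continuity at the supremum endpoint is guaranteed by the quantitative bound of Proposition~\ref{pro3.2}(ii) and whose limit point lies in $S$ by closedness. If a maximal element $y^*$ had finite endpoint $T^*$, a fresh application of Proposition~\ref{locally weak invariant} at $y^*(T^*)\in S$ would furnish a strict extension of $y^*$, contradicting maximality; hence $T^*=+\infty$ and (i) follows. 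The main obstacle I anticipate is precisely this globalization: one must verify that the concatenation of a chain of solutions really is itself a solution in the $L^\infty$-selection sense of Proposition~\ref{pro3.1}, and that Proposition~\ref{locally weak invariant} can be reapplied uniformly at every point of $S$, which is where the global hypothesis of local boundedness of $A^\circ$ on $S$ enters decisively.
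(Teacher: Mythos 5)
Your cycle $(i)\Rightarrow(ii)\Rightarrow(iii)\Rightarrow(i)$ matches the paper's architecture. The steps $(ii)\Rightarrow(iii)$ (polarity between $\mathrm{T}_{S}$ and $\mathrm{N}_{S}$) and $(iii)\Rightarrow(i)$ are sound; your Zorn-based globalization is a legitimate variant of the paper's argument, which instead sets $\bar{T}:=\sup\{T:\exists\text{ a solution staying in }S\text{ on }[0,T]\}$ and contradicts $\bar{T}<+\infty$ using the \emph{uniform} lower bound $\tfrac{1}{3k}$ on the extension time coming from the explicit formula for $T$ in Proposition \ref{locally weak invariant}. Your route trades that uniformity for a chain-limit argument; both hinge on local boundedness of $A^{\circ}$ on $S$ together with Proposition \ref{pro3.2}(ii), and both need the (easy but unstated in the paper) fact that concatenations of solutions are solutions.

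The genuine gap is in $(i)\Rightarrow(ii)$. You assert that Proposition \ref{pro3.2}(i) furnishes $v\in F(x_{0})$ with $\frac{d^{+}x(0)}{dt}=v-\Pi_{A(x_{0})}(v)$ \emph{for the invariant solution} $x(\cdot)$. It does not: the forward part of that proposition describes $\frac{d^{+}x(t)}{dt}$ only for a.e.\ $t\geq 0$ (the point $t=0$ is negligible, and the $L^{\infty}$ selection $f$ of $F(x(\cdot))$ underlying $x(\cdot)$ need not have a Lebesgue point at $t=0$, so the right derivative at $0$ need not even exist, let alone have the lazy form); the converse part constructs \emph{some other} solution with the prescribed initial right velocity, and that solution has no reason to remain in $S$, so it yields no element of $\mathrm{T}_{S}^{B}(x_{0})$. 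The paper circumvents this by taking any $w\in\mathrm{Limsup}_{t\downarrow 0}\,t^{-1}(x(t)-x_{0})$ --- nonempty because $x(\cdot)$ is Lipschitz near $0$, which itself uses the bound $\|A^{\circ}(x)\|\leq\mathfrak{m}(x_{0})+\varepsilon$ on $S\cap\mathrm{B}(x_{0},\rho)$ --- writing $t^{-1}(x(t)-x_{0})=t^{-1}\int_{0}^{t}\dot{x}(\tau)\,d\tau$ and passing to the limit through the closed convex hull of $\bigcup_{\tau\in[0,t]}\bigl(F(x(\tau))-A(x(\tau))\cap\mathrm{B}_{\|F(x(\tau))\|+\mathfrak{m}(x_{0})+\varepsilon}\bigr)$ via upper semicontinuity of the truncated operator; such a $w$ lies in $F(x_{0})-A(x_{0})\cap\mathrm{B}_{\|F(x_{0})\|+\mathfrak{m}(x_{0})+\varepsilon}$ and automatically in $\mathrm{T}_{S}^{B}(x_{0})$, and letting $\varepsilon\downarrow 0$ gives $(ii)$. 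Some version of this limiting argument is unavoidable; as written, your step exhibits a tangent vector that may not exist.
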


\begin{proof}
$(i)\Rightarrow (ii)$. Given an $x_{0}\in S$ we choose a solution $x(\cdot
):=x(\cdot ;x_{0})$ of $(\ref{ieq})$ that belongs to $S$. Fix $\varepsilon
>0 $. By $(\ref{mx0})$ and the current assumption we also choose $\rho >0$
such that 
\begin{equation*}
\left\Vert A^{\circ }(x)\right\Vert \leq \mathfrak{m}(x_{0})+\varepsilon
\,\,\,\mbox{for all}\,\,\,x\in \mathrm{B}(x_{0},\rho )\cap S.
\end{equation*}%
Then for any $x\in \mathrm{B}(x_{0},\rho )\cap S$ and any $v\in F(x)$ we get 
\begin{equation*}
\left\Vert \Pi _{A(x)}(v)\right\Vert \leq \left\Vert \Pi _{A(x)}(v)-A^{\circ
}(x)\right\Vert +\left\Vert A^{\circ }(x)\right\Vert \leq \left\Vert
v\right\Vert +\left\Vert A^{\circ }(x)\right\Vert \leq \left\Vert
F(x)\right\Vert +\mathfrak{m}(x_{0})+\varepsilon .
\end{equation*}%
Let $T>0$ be such that $x(t)\in \mathrm{B}(x_{0},\rho )\cap S$ for all $t\in
\lbrack 0,T],$ so that for all $v\in F(x(t))$ and $t\in \lbrack 0,T]$ we
have\ 
\begin{equation*}
\left\Vert \Pi _{A(x(t))}(v)\right\Vert \leq \left\Vert F(x(t))\right\Vert +%
\mathfrak{m}(x_{0})+\varepsilon ;
\end{equation*}%
hence, by Proposition $\ref{pro3.2}(i)$, 
\begin{equation}
\dot{x}(t)\in F(x(t))-A(x(t))\cap \mathrm{B}_{\left\Vert F(x(t))\right\Vert +%
\mathfrak{m}(x_{0})+\varepsilon }\,\,\,a.e.\,\,\,t\in \lbrack 0,T],
\label{by}
\end{equation}%
and $x(\cdot )$ is Lipschitz on $[0,T]$ (observing that $\mathrm{B}%
_{\left\Vert F(x(t))\right\Vert +\mathfrak{m}(x_{0})+\varepsilon }\subset 
\mathrm{B}_{\left\Vert F(x_{0})\right\Vert +L\rho +\mathfrak{m}%
(x_{0})+\varepsilon }$)$.$ Take\ $w\in \text{Limsup}_{t\downarrow
0}t^{-1}(x(t)-x_{0})$ (this Painleve-Kuratowski upper limit is nonempty, due
to the Lipschitzianity of $x(\cdot )$). Then, since the mappings $x\mapsto
A(x)\cap \mathrm{B}_{\left\Vert F(x)\right\Vert +\mathfrak{m}%
(x_{0})+\varepsilon }$ and $x\mapsto F(x)$ are upper semicontinuous, by
using $(\ref{by})$ we get\ 
\begin{eqnarray}
w &\in &\text{Limsup}_{t\downarrow 0}\frac{1}{t}\int_{0}^{t}\dot{x}(\tau
)d\tau  \notag \\
&\subset &\text{Limsup}_{t\downarrow 0}\left( \overline{\text{co}}\left(
\bigcup\limits_{\tau \in \lbrack 0,t]}F(x(\tau ))-A(x(\tau ))\cap \mathrm{B}%
_{\left\Vert F(x(\tau ))\right\Vert +\mathfrak{m}(x_{0})+\varepsilon
}\right) \right)  \notag \\
&\subset &F(x_{0})-A(x_{0})\cap \mathrm{B}_{\left\Vert F(x_{0})\right\Vert +%
\mathfrak{m}(x_{0})+\varepsilon },  \label{be}
\end{eqnarray}%
and we conclude that, as $\varepsilon $ goes to $0$ (observe that $v$ is
independent of $\varepsilon $), 
\begin{equation*}
w\in F(x_{0})-A(x_{0})\cap \mathrm{B}_{\left\Vert F(x_{0})\right\Vert +%
\mathfrak{m}(x_{0})}.
\end{equation*}%
Thus, $(ii)$ follows, due to the obvious fact that $\text{Limsup}%
_{t\downarrow 0}t^{-1}(x(t)-x_{0})\subset \mathrm{T}_{S}(x_{0}).$

$(iii)\Rightarrow (i).$ Fix $x_{0}\in S$. By $(\ref{mx0})$ we choose\ $r,m>0$
such that $\mathfrak{m}(x)\leq m$ for every $x\in S\cap \mathrm{B}(x_{0},r).$
It suffices to prove that 
\begin{equation*}
\bar{T}:=\sup \{T:\exists \,\,x(\cdot ;x_{0})\,\,\text{a solution of $(\ref%
{ieq})$ such that}\,\,x(t;x_{0})\in S\,\,\,\forall t\in \lbrack
0,T]\}=+\infty .
\end{equation*}%
According to Proposition $\ref{locally weak invariant},$ there exist some $%
T_{1}>0$ and a solution $x_{1}(\cdot ;x_{0})$ of differential inclusion $(%
\ref{ieq})$ such that $x_{1}(t;x_{0})\in S$ for all $t\in \lbrack 0,T_{1}];$
hence, $\bar{T}\geq T_{1}>0.$

We proceed by contradiction and assume that $\bar{T}<+\infty .$ By
Proposition $\ref{pro3.2}$, we let $r_{1}>0$ be such that for every solution 
$x(\cdot ;x_{0})$ of $(\ref{ieq})$ we have 
\begin{equation*}
x(t;x_{0})\in \mathrm{B}(x_{0},r_{1})\,\,\forall t\in \lbrack 0,\bar{T}].
\end{equation*}%
We set 
\begin{equation*}
k:=\underset{x\in \mathrm{B}(x_{0},r_{1}+1)}{\sup }\left\Vert
F(x)\right\Vert +\underset{x\in \mathrm{B}(x_{0},r_{1}+1)\cap S}{\sup }%
\left\Vert A^{\circ }(x)\right\Vert ,
\end{equation*}%
so that $k<+\infty ,$ due to $(\ref{mx0})$ and the compactness of the set $%
\mathrm{B}(x_{0},r_{1}+1)\cap S.$ By definition of $\bar{T},$ for $%
0<\varepsilon <\min \left\{ \frac{1}{3k},\bar{T}\right\} $ we choose a
solution $x_{\varepsilon }(\cdot ;x_{0})$ of $(\ref{ieq})$ such that$\
x_{\varepsilon }(t;x_{0})\in S\ $for all $t\in \lbrack 0,\bar{T}-\varepsilon
].$ We put\ $y_{0}:=x_{\varepsilon }(\bar{T}-\varepsilon ;x_{0})\in \mathrm{B%
}(x_{0},r_{1})\cap S,$ so that $\mathrm{B}(y_{0},1)\subset \mathrm{B}%
(x_{0},r_{1}+1)$ and the following relations follows easily 
\begin{equation*}
||A^{\circ }(y)||\leq \underset{u\in \mathrm{B}(x_{0},r_{1}+1)\cap S}{\sup }%
\left\Vert A^{\circ }(u)\right\Vert =:m_{1}\,\,\,\forall y\in S\cap \mathrm{B%
}(y_{0},1),
\end{equation*}%
\begin{equation*}
\underset{\xi \in \mathrm{N}_{S}(y)}{\sup }\,\,\underset{v\in F(y)}{\inf }%
\,\,\underset{x^{\ast }\in A(y)\cap \mathrm{B}_{m_{1}+\left\Vert
F(y)\right\Vert }}{\inf }\langle \xi ,v-x^{\ast }\rangle \leq 0\text{ for
all }y\in S\cap \mathrm{B}(y_{0},1).
\end{equation*}%
Then, according to Proposition $\ref{locally weak invariant}$, there exists
a solution $x_{2}(\cdot ;y_{0})$ of $(\ref{ieq})$ such that$%
\,\,x_{2}(t;y_{0})\in S$ for all $t\in \lbrack 0,\frac{1}{3k}].$
Consequently, the function $z(\cdot ;x_{0})$ defined as 
\begin{equation*}
z(t;x_{0}):=%
\begin{cases}
x_{\varepsilon }(t;x_{0}) & \mbox{\ if }s\in \lbrack 0,\bar{T}-\varepsilon ]
\\ 
x_{2}(t-\bar{T}+\varepsilon ;y_{0}) & \mbox{\ if }s\in \lbrack \bar{T}%
-\varepsilon ,+\infty \lbrack ,%
\end{cases}%
\end{equation*}%
is a solution of $(\ref{ieq})$ and satisfies $z(t;x_{0})\in S$ for all\ $%
t\in \lbrack 0,\tilde{T}]$ with $\tilde{T}:=\bar{T}+\frac{1}{3k}-\varepsilon
>\bar{T}$, which contradicts the definition of $\bar{T}.$ Hence $\bar{T}%
=\infty $, and\ $S$ is weak invariant.
\end{proof}

\section{Strong $a$-Lyapunov pairs}

In this section, we use the invariance results of the previous section to
characterize strong $a$-Lyapunov pairs with respect to differential
inclusion (\ref{ieq}), 
\begin{equation*}
\dot{x}(t)\in F(x(t))-A(x(t)),\,\,\mbox{\ a.e.\ }t\geq 0,\text{ \ }%
\,\,x(0)=x_{0}\in \overline{\text{dom}A},
\end{equation*}%
where $A:H\rightrightarrows H$ is a maximal monotone operator and $F$ is an $%
L$-Lipschitz Cusco mapping.

\begin{definition}
\label{d.Lyapunov}Let $V,W:\mathbb{R}^{n}\rightarrow \mathbb{R\cup \{}%
+\infty \mathbb{\}}$ be lsc functions such that $W\geq 0$ and let $a\geq 0$.
We say that $(V,W)$ is a strong\ $a$-Lyapunov pair for $(\ref{ieq})$ if for
any $x_{0}\in \overline{\normalfont{\text{dom$A$}}}$ we have 
\begin{equation}
e^{at}V(x(t;x_{0}))+\int_{0}^{t}W(x(\tau ;x_{0}))d\tau \leq V(x_{0})\,\,%
\text{\ \ \ \ \ \ \ }\forall t\geq 0,  \label{d.L.501}
\end{equation}%
for every\ solution $x(\cdot ;x_{0})$ of $(\ref{ieq})$.
\end{definition}

The following lemma shows that the non-regularity of the functions $V,W$
candidates to form $a$-Lyapunov pairs is mainly carried by the function $V.$
For $k\geq 1$ we denote 
\begin{equation}
W_{k}(x):=\underset{z\in \mathbb{R}^{n}}{\inf }\{W(z)+k\left\Vert
x-z\right\Vert \}.  \label{w-e}
\end{equation}

\begin{lemma}
\label{l5.3}Given a function $W:\mathbb{R}^{n}\rightarrow \mathbb{R}_{+}%
\mathbb{\cup \{}+\infty \mathbb{\}}$, $W_{k}$ defined in $(\ref{w-e})$\ is $%
k $-Lipschitz, and we have $W_{k}(x)\nearrow W(x)$ for all $x\in \mathbb{R}%
^{n} $. Moreover, if $x(\cdot ;x_{0})$ is a solution of\ differential
inclusion $(\ref{ieq})$, then $W$ satisfies inequality $(\ref{d.L.501})$ iff 
$W_{k}$ does for all $k\geq 1.$
\end{lemma}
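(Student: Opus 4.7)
The plan is to prove the three claims in order, each by a short standard argument.

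\textbf{Lipschitzianity.} For any $x,y\in\mathbb{R}^{n}$ and any $z\in\mathbb{R}^{n}$, the triangle inequality gives $W(z)+k\|x-z\|\leq W(z)+k\|y-z\|+k\|x-y\|$; taking the infimum over $z$ yields $W_{k}(x)\leq W_{k}(y)+k\|x-y\|$, and the symmetric estimate follows in the same manner. Hence $W_{k}$ is $k$-Lipschitz (and, in particular, finite-valued whenever $W\not\equiv+\infty$).

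\textbf{Monotone pointwise convergence.} The inequality $W_{k}(x)\leq W_{k+1}(x)$ is immediate from $W(z)+k\|x-z\|\leq W(z)+(k+1)\|x-z\|$, and the choice $z=x$ gives $W_{k}(x)\leq W(x)$. Set $\ell:=\sup_{k}W_{k}(x)\leq W(x)$. If $\ell=+\infty$, then $W(x)=+\infty$ and there is nothing to prove. If $\ell<+\infty$, then for each $k\geq 1$ I would select $z_{k}\in\mathbb{R}^{n}$ with
\begin{equation*}
W(z_{k})+k\|x-z_{k}\|\leq W_{k}(x)+\tfrac{1}{k}\leq \ell+\tfrac{1}{k}.
\end{equation*}
Since $W\geq 0$, this forces $k\|x-z_{k}\|\leq \ell+\tfrac{1}{k}$, so $z_{k}\to x$. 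The lower semicontinuity of $W$ then gives $W(x)\leq \liminf_{k}W(z_{k})\leq \ell$, whence $W(x)=\ell=\lim_{k}W_{k}(x)$.

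\textbf{Equivalence of the Lyapunov inequality.} Assume first that $W$ satisfies $(\ref{d.L.501})$ along the solution $x(\cdot;x_{0})$. Since $W_{k}\leq W$ pointwise,
\begin{equation*}
e^{at}V(x(t;x_{0}))+\int_{0}^{t}W_{k}(x(\tau;x_{0}))\,d\tau \leq e^{at}V(x(t;x_{0}))+\int_{0}^{t}W(x(\tau;x_{0}))\,d\tau \leq V(x_{0})
\end{equation*}
for every $k\geq 1$. Conversely, assume the inequality holds for every $W_{k}$. The sequence $(W_{k}(x(\tau;x_{0})))_{k}$ is nonnegative and increases to $W(x(\tau;x_{0}))$ by the previous step, so the monotone convergence theorem yields
\begin{equation*}
\int_{0}^{t}W_{k}(x(\tau;x_{0}))\,d\tau \;\nearrow\; \int_{0}^{t}W(x(\tau;x_{0}))\,d\tau ,
\end{equation*}
and letting $k\to\infty$ in $(\ref{d.L.501})$ written for $W_{k}$ recovers the inequality for $W$ (both sides allowed to take the value $+\infty$).

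The only delicate point is the pointwise convergence $W_{k}(x)\to W(x)$, where lower semicontinuity of $W$ is essential; the remaining steps are straightforward applications of inf-convolution properties and monotone convergence.
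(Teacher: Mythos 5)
Your proof is correct and follows the standard route: the paper simply cites \cite{CLSW} for the Lipschitz and monotone-convergence properties of the inf-convolution and invokes Fatou's lemma for the equivalence, and your argument fills in exactly those details (your appeal to the monotone convergence theorem is interchangeable with Fatou here, since the $W_{k}$ are nonnegative and increasing). You are also right to flag that lower semicontinuity of $W$ is the essential ingredient for $W_{k}(x)\nearrow W(x)$; the lemma's statement omits it, but it is in force throughout the paper's use of the lemma.
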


\begin{proof}
The first statement of the lemma is known (see, e.g., \cite{CLSW}), and the
second statement of the lemma follows easily from Fatou's lemma.
\end{proof}

\begin{lemma}
\label{lt}Consider the operator $\hat{A}:\mathbb{R}^{n}\times \mathbb{R}%
^{3}\rightarrow \mathbb{R}^{n+3}$ and the function $\tilde{V}:\mathbb{R}%
^{n+1}\times \mathbb{R}_{+}\rightarrow \mathbb{R\cup \{+\infty \}}$ defined
as 
\begin{equation}
\hat{A}(x,\alpha ,\beta ,\gamma ):=(A(x),\theta _{\mathbb{R}^{3}}),\,\,%
\tilde{V}(x,\alpha ,\beta ):=e^{a\beta }V(x)+\alpha ,  \label{tranform}
\end{equation}%
together with the mappings $\hat{F}_{k}:\mathbb{R}^{n+3}\rightarrow \mathbb{R%
}^{n+3},$ $k\geq 1,$ given by (recall $(\ref{w-e})$) 
\begin{equation*}
\hat{F}_{k}(x,\alpha ,\beta ,\gamma ):=(F(x),W_{k}(x),1,0).
\end{equation*}%
Then $\hat{A}$ is maximal monotone with $\normalfont{\text{dom}}\hat{A}=%
\normalfont{\text{dom$A$}}\times \mathbb{R}^{3}$, $\hat{F}_{k}$ is Lipschitz
with constant $(L^{2}+k^{2})^{\frac{1}{2}}, $ and consequently, the
following differential inclusion possesses solutions,%
\begin{equation}
\dot{z}(t)\in \hat{F}_{k}(z(t))-\hat{A}(z(t))\,\,\text{a.e.}\,\,\,t\geq 0,%
\text{ }z(0)=z_{0}=(x_{0},y_{0},z_{0},w_{0})\in \overline{%
\normalfont{\text{dom$A$}}}\times \mathbb{R}^{3},  \label{ieq.transform}
\end{equation}%
and every solutions is written as 
\begin{equation*}
z(t;z_{0})=(x(t;x_{0}),y_{0}+\int_{0}^{t}W_{k}(x(\tau ;x_{0}))d\tau
,z_{0}+t,w_{0}),
\end{equation*}%
for a solution $x(\cdot ;x_{0})$ of $(\ref{ieq})$.
\end{lemma}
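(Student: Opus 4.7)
The plan is to verify each claim of the lemma in sequence: maximal monotonicity of $\hat{A}$ with the announced domain, the Lipschitz-Cusco property of $\hat{F}_k$ with constant $(L^2+k^2)^{1/2}$, existence of solutions to (\ref{ieq.transform}) via Theorem \ref{theo.3.1}, and the explicit product form of these solutions via Proposition \ref{pro3.1}.

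First I would settle the structure of $\hat{A}$. Monotonicity is immediate from the identity $\langle (y_1,0,0,0)-(y_2,0,0,0),(x_1,\alpha_1,\beta_1,\gamma_1)-(x_2,\alpha_2,\beta_2,\gamma_2)\rangle = \langle y_1-y_2, x_1-x_2\rangle \geq 0$ whenever $y_i\in A(x_i)$. For maximality, assume that a point $((x,\alpha,\beta,\gamma),(u,\eta_1,\eta_2,\eta_3))$ is monotonically related to every element of $\mathrm{graph}\,\hat{A}$. Freeing the dummy coordinates over $\mathbb{R}^3$ in the above pairing forces $\eta_1=\eta_2=\eta_3=0$, and then maximal monotonicity of $A$ yields $u\in A(x)$, so the point belongs to $\mathrm{graph}\,\hat{A}$. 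The equality $\text{dom}\,\hat{A}=\text{dom}\,A\times\mathbb{R}^3$ is evident from the definition.

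Next, Lemma \ref{l5.3} asserts that $W_k$ is single-valued and $k$-Lipschitz, so the images $\hat{F}_k(x,\alpha,\beta,\gamma)=F(x)\times\{W_k(x)\}\times\{1\}\times\{0\}$ are nonempty, convex, and compact. The product rule for the Hausdorff distance gives $d_H(\hat{F}_k(z_1),\hat{F}_k(z_2))^2 = d_H(F(x_1),F(x_2))^2 + |W_k(x_1)-W_k(x_2)|^2 \leq (L^2+k^2)\|x_1-x_2\|^2 \leq (L^2+k^2)\|z_1-z_2\|^2$, so $\hat{F}_k$ is Lipschitz Cusco with the stated constant. Theorem \ref{theo.3.1} then applies in $\mathbb{R}^{n+3}$ to (\ref{ieq.transform}).

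For the explicit form of solutions, let $z(\cdot)=(x(\cdot),y(\cdot),u(\cdot),w(\cdot))$ be any solution. Proposition \ref{pro3.1} furnishes, on every $[0,T]$, a measurable selection $\hat{f}(t)\in\hat{F}_k(z(t))$ with $\dot{z}(t)\in\hat{f}(t)-\hat{A}(z(t))$ a.e., and necessarily $\hat{f}(t)=(f(t),W_k(x(t)),1,0)$ for some measurable $f(t)\in F(x(t))$. Since $\hat{A}$ vanishes on the last three components, the inclusion decouples into $\dot{x}(t)\in f(t)-A(x(t))$ together with $\dot{y}(t)=W_k(x(t))$, $\dot{u}(t)=1$, $\dot{w}(t)=0$ a.e. Proposition \ref{pro3.1} applied in the reverse direction identifies $x(\cdot)$ as a solution of (\ref{ieq}) starting at $x_0$, and integrating the remaining three scalar equations from the initial data yields the announced product form. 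The only delicate step is the maximality verification for $\hat{A}$, which I expect to be a routine but slightly fussy matter of checking that the three dummy coordinates are genuinely free.
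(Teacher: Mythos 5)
Your proof is correct. The paper states Lemma \ref{lt} without any proof, treating it as routine; your verification (maximality of $\hat{A}$ obtained by letting the three dummy coordinates range freely to force the corresponding output components to vanish, the product estimate giving the Lipschitz constant $(L^{2}+k^{2})^{1/2}$ for $\hat{F}_{k}$, existence via Theorem \ref{theo.3.1} in $\mathbb{R}^{n+3}$, and the componentwise decoupling of the inclusion) is exactly the intended argument.
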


We need the following result which provides us with a local criterion for
strong $a$-Lyapunov pairs.

\begin{proposition}
\label{t5.3}Let $V,W:\mathbb{R}^{n}\rightarrow \mathbb{R\cup \{+\infty \}}$
be two proper lsc functions such that $\normalfont{\text{dom$V$}}\subset %
\normalfont{\text{dom$A$}},W\geq 0$ and let $a\geq 0.$ Fix\ $x_{0}\in %
\normalfont{\text{dom$V$}}$ and assume that\ for some $\rho >0$ we have, for
all\ $x\in \mathrm{B}(x_{0},\rho ),$ 
\begin{equation}
\underset{\xi \in \partial _{P}V(x)}{\sup }\,\,\underset{v\in F(x)}{\sup }%
\,\,\underset{x^{\ast }\in A(x)}{\inf }\langle \xi ,v-x^{\ast }\rangle
+aV(x)+W(x)\leq 0,  \label{sl.2}
\end{equation}%
\begin{equation}
\underset{\xi \in \partial _{P,\infty }V(x)}{\sup }\,\,\underset{v\in F(x)}{%
\sup }\,\,\underset{x^{\ast }\in A(x)}{\inf }\langle \xi ,v-x^{\ast }\rangle
\leq 0.  \label{sl.2b}
\end{equation}%
Then there exists some $T>0$ such that\ for every solution $x(\cdot ;x_{0})$
of differential inclusion $(\ref{ieq})$ one has 
\begin{equation*}
e^{at}V(x(t;x_{0}))+\int_{0}^{t}W(x(\tau ;x_{0}))d\tau \leq
V(x_{0})\,\,\forall t\in \lbrack 0,T].
\end{equation*}
\end{proposition}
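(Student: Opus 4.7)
The plan is to transform the Lyapunov inequality into a local strong invariance problem for the augmented system of Lemma \ref{lt} and then apply Proposition \ref{locally strong invariant}. By Lemma \ref{l5.3}, it suffices to prove the inequality with $W$ replaced by each $W_{k}$, $k\geq 1$, and then let $k\to\infty$ via monotone convergence. Fix $k\geq 1$ and take the initial condition $z_{0}=(x_{0},0,0,V(x_{0}))$ for the augmented system $(\ref{ieq.transform})$, whose solutions are of the form $z(t)=(x(t;x_{0}),\int_{0}^{t}W_{k}(x(\tau;x_{0}))d\tau,\,t,\,V(x_{0}))$, with $x(\cdot;x_{0})$ any solution of $(\ref{ieq})$. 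Define
\begin{equation*}
S:=\{(x,\alpha,\beta,\gamma)\in\mathbb{R}^{n+3}:\tilde V(x,\alpha,\beta)\leq\gamma\text{ and }\beta\geq 0\}.
\end{equation*}
The inclusion $\text{dom}\,V\subset\text{dom}\,A$ makes $S$ satisfy condition $(\ref{cond.})$ relative to $\hat A$; also $z_{0}\in S$ with $\tilde V(z_{0})=V(x_{0})$, and $z(t)\in S$ is exactly the desired $W_{k}$-inequality.

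Next I would verify the criterion of Proposition \ref{locally strong invariant} on $B(z_{0},\rho')\cap S\cap\text{dom}\,\hat A$ for some $\rho'\leq\rho$. Standard subdifferential calculus for $\tilde V(x,\alpha,\beta)=e^{a\beta}V(x)+\alpha$ gives
\begin{equation*}
\partial_{P}\tilde V=\{(e^{a\beta}\xi,1,ae^{a\beta}V(x)):\xi\in\partial_{P}V(x)\},\quad\partial_{P,\infty}\tilde V=\{(\xi,0,0):\xi\in\partial_{P,\infty}V(x)\}.
\end{equation*}
Combining with the half-space normals $\{(0,0,-\mu,0):\mu\geq 0\}$ to $\{\beta\geq 0\}$, every proximal normal $\zeta$ to $S$ is of the form $(\lambda e^{a\beta}\xi,\lambda,\lambda ae^{a\beta}V(x)-\mu,-\lambda)$ with $\xi\in\partial_{P}V(x)$ (Type 1), or $(\xi,0,-\mu,0)$ with $\xi\in\partial_{P,\infty}V(x)$ (Type 2), where $\lambda,\mu\geq 0$. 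Since $\hat v-\hat x^{*}=(v-x^{*},W_{k}(x),1,0)$, in the first case hypothesis $(\ref{sl.2})$ yields
\begin{equation*}
\sup_{v\in F(x)}\inf_{x^{*}\in A(x)}\langle\zeta,\hat v-\hat x^{*}\rangle\leq\lambda(W_{k}(x)-e^{a\beta}W(x))-\mu\leq 0,
\end{equation*}
where the last inequality uses $\beta\geq 0$ together with $W_{k}\leq W$ and $W\geq 0$. In the second case, hypothesis $(\ref{sl.2b})$ combined with $\mu\geq 0$ gives $\langle\zeta,\hat v-\hat x^{*}\rangle=\langle\xi,v-x^{*}\rangle-\mu\leq 0$.

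Proposition \ref{locally strong invariant} now yields some $T_{k}>0$ such that every solution $z(\cdot;z_{0})$ of $(\ref{ieq.transform})$ stays in $S$ on $[0,T_{k}]$, which amounts to $e^{at}V(x(t))+\int_{0}^{t}W_{k}(x(\tau))d\tau\leq V(x_{0})$. The main obstacle is to secure a lower bound $T_{k}\geq T>0$ independent of $k$: for this I would appeal to Proposition \ref{pro3.2}(ii) applied to the original system $(\ref{ieq})$ (whose Lipschitz data does not depend on $k$) to keep $x(t)$ inside $B(x_{0},\rho/3)$ on a common interval $[0,\tilde T]$, and use that $(\ref{sl.2})$ makes $W$ locally bounded on $\text{dom}\,\partial_{P}V\cap B(x_{0},\rho)$ so that the second coordinate $\int_{0}^{t}W_{k}(x(\tau))d\tau$ stays controlled uniformly in $k$. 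Finally, letting $k\to\infty$ and invoking monotone convergence on $W_{k}\nearrow W$ gives the desired inequality on $[0,T]$.
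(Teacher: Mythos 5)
Your overall strategy coincides with the paper's: augment the system as in Lemma \ref{lt}, replace $W$ by the Lipschitz approximations $W_{k}$ of Lemma \ref{l5.3}, verify the proximal-normal criterion for the epigraph of $\tilde V$, invoke Proposition \ref{locally strong invariant}, and let $k\to \infty $. Your verification of the normal-cone inequality (both the case $\lambda >0$ and the singular case, including the harmless extra term $-\mu $ coming from the constraint $\beta \geq 0$) is correct and matches the computation (\ref{t.sl.1})--(\ref{t.sl.2}) in the paper's proof.

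The genuine gap is precisely the step you flag as ``the main obstacle'': obtaining a time $T$ independent of $k$. A single application of Proposition \ref{locally strong invariant} to the augmented system produces a time $T_{k}$ that degenerates as $k\to \infty $: the Lipschitz constant of $\hat F_{k}$ is $(L^{2}+k^{2})^{1/2}$, and the time during which the augmented trajectory remains in a fixed ball around $z_{0}$ is governed by the second coordinate $\int_{0}^{t}W_{k}(x(\tau ))d\tau $, whose local bound grows with $k$ since $W_{k}$ is only $k$-Lipschitz. Your proposed remedy does not close this: confining $x(t)$ to $\mathrm{B}(x_{0},\rho /3)$ controls only the first coordinate, and (\ref{sl.2}) does not yield a bound on $W$ along the trajectory, because $x(\tau )$ need not belong to $\text{dom}\,\partial _{P}V$. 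The missing idea is the paper's continuation argument: the normal-cone condition is in fact verified at \emph{every} point of $\text{epi}\tilde V$ whose $x$-component lies in $\mathrm{B}(x_{0},\rho )$, with no restriction on the remaining coordinates. Hence, fixing $T$ by $3(\left\Vert F(x_{0})\right\Vert +\left\Vert A^{\circ }(x_{0})\right\Vert )Te^{cT}\leq \rho $ via Proposition \ref{pro3.2}(ii) applied to the \emph{original} system (so $T$ is independent of $k$), one sets $\bar T:=\sup \{t:z(s;z_{0})\in \text{epi}\tilde V\ \forall s\in \lbrack 0,t]\}$ and, if $\bar T<T$, restarts the local invariance at $z(\bar T;z_{0})$ (whose $x$-component still lies in the open ball of radius $\rho $) to contradict the definition of $\bar T$. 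With this restart the uniform control of $\int_{0}^{t}W_{k}$ is unnecessary, and the remainder of your argument ($W_{k}\nearrow W$ and monotone convergence) goes through.
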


\begin{proof}
First, by Proposition \ref{pro3.2}(ii) we let $c>0$ be such that for any
solutions $x(\cdot ):=x(\cdot ;x_{0})$ of $(\ref{ieq})$ it holds 
\begin{equation*}
\left\Vert x(t)-x_{0}\right\Vert \leq 3(\left\Vert F(x_{0})\right\Vert
+\left\Vert A^{\circ }(x_{0})\right\Vert )te^{ct}\text{ \ for all }t\geq 0,
\end{equation*}%
and choose $T>0$ such that 
\begin{equation}
3(\left\Vert F(x_{0})\right\Vert +\left\Vert A^{\circ }(x_{0})\right\Vert
)Te^{cT}\leq \rho .  \label{t}
\end{equation}%
As in Lemma \ref{lt}, we define the proper and lsc function $\tilde{V}:%
\mathbb{R}^{n+1}\times \mathbb{R}_{+}\rightarrow \mathbb{R\cup \{+\infty \}}$
as $\tilde{V}(x,\alpha ,\beta ):=e^{a\beta }V(x)+\alpha ,$ so that epi$%
\tilde{V}$ is closed and satisfies 
\begin{equation*}
\text{epi}\tilde{V}\subset \text{dom}V\times \mathbb{R}^{3}\subset \text{dom}%
A\times \mathbb{R}^{3}=\text{dom}\hat{A},
\end{equation*}%
where $\hat{A}$ is also defined as in\ Lemma \ref{lt}; hence, condition\ (%
\ref{cond.}) is obviously satisfied for epi$\tilde{V}$.

\textbf{Claim.} We claim that for any given $\tilde{z}%
:=(x_{1},y_{1},z_{1},w_{1})\in \text{epi}\tilde{V}$ with\ $\left\Vert
x_{1}-x_{0}\right\Vert <\rho $, there exists small enough $\varepsilon >0$
such that for each $(x,y,z,w)\in \mathrm{B}(\tilde{z},\varepsilon )\,\cap \,$%
epi$\tilde{V},$ $(\tilde{\xi},-\kappa )\in \mathrm{N}_{\text{epi}\tilde{V}%
}^{P}(x,y,z,w),\ $and $(v,W_{k}(x),1,0)\in \hat{F}_{k}(x,y,z,w)$ there
exists $x^{\ast }\in A(x)$ such that 
\begin{equation}
\langle (\tilde{\xi},-\kappa ),(v-x^{\ast },W_{k}(x),1,0)\rangle \leq 0.
\label{cl2}
\end{equation}

Indeed, with $\tilde{z}\ $as in the claim let us\ choose\ $\varepsilon >0$
such that 
\begin{equation*}
(x,y,z,w)\in \mathrm{B}(\tilde{z},\varepsilon )\cap \text{epi}\tilde{V}%
\Rightarrow x\in \mathrm{B}(x_{0},\rho ).
\end{equation*}%
Let $(x,y,z,w),$ $(\tilde{\xi},-\kappa ),$ and $(v,W_{k}(x),1,0)$ be as in
the claim, so that $x\in \mathrm{B}(x_{0},\rho )\cap \text{dom}V$ and $v\in
F(x)$, as well as\ $\kappa \geq 0$ (see \cite[Exercise 2.1]{CLSW}). We may
distinguish two cases:

$(i)$ If $\kappa >0$, then $w=\tilde{V}(x,y,z)$ and, without loss of
generality, we may\ suppose that $\kappa =1.$\ Hence, $\tilde{\xi}%
=(e^{az}\xi ,1,ae^{az}V(x))\in \partial _{P}\tilde{V}(x,y,z)$ for some\ $\xi
\in \partial _{P}V(x).$ Consequently, by the current hypothesis there exists 
$x^{\ast }\in A(x)$ such that 
\begin{equation*}
\langle \xi ,v-x^{\ast }\rangle +aV(x)+W_{k}(x)\leq \langle \xi ,v-x^{\ast
}\rangle +aV(x)+W(x)\leq 0.
\end{equation*}%
In other words, we have $(v-x^{\ast },W_{k}(x),1,0)\in \hat{F}_{k}(x,y,z,w)-%
\hat{A}(x,y,z,w)$ and%
\begin{eqnarray}
\langle (\tilde{\xi},-1),(v-x^{\ast },W_{k}(x),1,0)\rangle &=&\langle
(e^{az}\xi ,1,ae^{az}V(x),-1),(v-x^{\ast },W_{k}(x),1,0)\rangle  \notag \\
&=&e^{az}\langle \xi ,v-x^{\ast }\rangle +W_{k}(x)+ae^{az}V(x)  \notag \\
&=&e^{az}(\langle \xi ,v-x^{\ast }\rangle +aV(x)+W_{k}(x))  \notag \\
&&\quad \quad +(1-e^{az})W_{k}(x)\leq 0,  \label{t.sl.1}
\end{eqnarray}%
and (\ref{cl2}) follows.

$(ii)$ If $\kappa =0$, then $\tilde{\xi}\in \partial _{P,\infty }\tilde{V}%
(x,y,z)$ and, so, $(\tilde{\xi},-\kappa )=(\xi ,\theta _{\mathbb{R}^{3}})$
for some $\xi \in \partial _{P,\infty }V(x).$ Then, by arguing as in the
paragraph above, the current hypothesis yields some\ $x^{\ast }\in A(x)$
such that $\langle \xi ,v-x^{\ast }\rangle \leq 0.$ Hence, $(v-x^{\ast
},W_{k}(x),1,0)\in \hat{F}_{k}(x,y,z,w)-\hat{A}(x,y,z,w)$ and 
\begin{equation}
\langle (\tilde{\xi},0),(v-x^{\ast },W_{k}(x),1,0)\rangle =\langle \xi
,v-x^{\ast }\rangle \leq 0;  \label{t.sl.2}
\end{equation}%
that is, (\ref{cl2}) follows in this case too. The claim is proved.

Now, we take a solution $x(\cdot ;x_{0})$ of (\ref{ieq}), so that 
\begin{equation*}
z(\cdot ;z_{0}):=(x(\cdot ;x_{0}),\int_{0}^{\cdot }W_{k}(x(\tau
;x_{0}))d\tau ,\cdot ,V(x_{0})),
\end{equation*}
with $z_{0}:=(x_{0},0,0,V(x_{0})),$ becomes a solution of (\ref%
{ieq.transform}). Then, from the claim (with $\tilde{z}:=z_{0}$) above and
Proposition \ref{locally strong invariant}, there exists some\ $\bar{t}>0$
such that 
\begin{equation}
z(t;z_{0})\in \text{epi}\tilde{V}\,\,\,\forall t\in \lbrack 0,\bar{t}];
\label{f}
\end{equation}%
that is, 
\begin{equation}
\bar{T}:=\sup \{t\geq 0:\text{such that}\,z(s;z_{0})\in \text{epi}\tilde{V}%
\,\,\forall s\in \lbrack 0,t]\}>0.  \label{ee}
\end{equation}%
Let us show that $\bar{T}\geq T,$ where $T$ is defined in (\ref{t}). We
proceed by contradiction and assume that $\bar{T}<T$. Then, because (by
Proposition \ref{pro3.2}(ii)) 
\begin{equation*}
\left\Vert x(\bar{T};x_{0})-x_{0}\right\Vert \leq 3(\left\Vert
F(x_{0})\right\Vert +\left\Vert A^{\circ }(x_{0})\right\Vert )\bar{T}e^{c%
\bar{T}}<\rho ,
\end{equation*}%
and\ $z(\bar{T};z_{0})=(x(\bar{T};x_{0}),\int_{0}^{\bar{T}}W_{k}(x(\tau
;x_{0}))d\tau ,\bar{T},V(x_{0}))\in \text{epi}\tilde{V}$, from the claim
above (with $\tilde{z}:=z(\bar{T};z_{0})$) and Proposition \ref{locally
strong invariant}, there exists some $t_{1}>0$ such that $z(t;z(\bar{T}%
;z_{0}))\in \text{epi}\tilde{V}$ for all $t\in \lbrack 0,t_{1}]$. Thus, $z(t+%
\bar{T};z_{0})=z(t;z(\bar{T};z_{0}))\in \text{epi}\tilde{V}$ for every $t\in
\lbrack 0,t_{1}]$, and we get a contradiction to the definition of $\bar{T}$.

Finally, from (\ref{ee}) we get \ 
\begin{equation*}
e^{at}V(x(t;x_{0}))+\int_{0}^{t}W_{k}(x(\tau ;x_{0}))d\tau \leq
V(x_{0})\,\,\forall t\in \lbrack 0,T].
\end{equation*}%
Moreover, because $T$ is independent of $k,$ by taking the limit as $%
k\rightarrow \infty $ we arrive at (as $W_{k}(x)\nearrow W(x)$, by Lemma \ref%
{l5.3}) 
\begin{equation*}
e^{at}V(x(t;x_{0}))+\int_{0}^{t}W(x(\tau ;x_{0}))d\tau \leq
V(x_{0})\,\,\forall t\in \lbrack 0,T],
\end{equation*}%
which is the desired inequality.
\end{proof}

We give now the desired\ characterization of\ strong $a$-Lyapunov pairs.

\begin{theorem}
\label{StrongLyap}Let $V,W,$ and $a$ be as in Proposition $\ref{t5.3},$ and
let $\partial $ stand for either $\partial _{P}\ $or $\partial _{F}.$ Then
the pair\ $(V,W)$ is a strong $a$-Lyapunov pair for $(\ref{ieq})$ iff for
all\ $x\in \normalfont{\text{dom$V$}}$ 
\begin{equation}
\underset{\xi \in \partial V(x)}{\sup }\,\,\underset{v\in F(x)}{\sup }\,\,%
\underset{x^{\ast }\in A(x)}{\inf }\langle \xi ,v-x^{\ast }\rangle
+aV(x)+W(x)\leq 0,  \label{s1}
\end{equation}%
\begin{equation}
\underset{\xi \in \partial _{P,\infty }V(x)}{\sup }\,\,\underset{v\in F(x)}{%
\sup }\,\,\underset{x^{\ast }\in A(x)}{\inf }\langle \xi ,v-x^{\ast }\rangle
\leq 0.  \label{s2}
\end{equation}
\end{theorem}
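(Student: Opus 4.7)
The theorem should follow from a local-to-global argument built on Proposition \ref{t5.3} (for sufficiency) and a differentiation-at-time-zero argument along well-chosen Lipschitz-driven trajectories (for necessity). Since $\partial_P V(x)\subset\partial_F V(x)$, the version of (\ref{s1}) with $\partial=\partial_F$ automatically implies the same inequality with $\partial=\partial_P$, so the two forms of the statement are equivalent; I will prove necessity in the (stronger) Fréchet version, and sufficiency in the (weaker) proximal version.

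\textbf{Sufficiency.} Assume (\ref{s1}) (with $\partial_P$) and (\ref{s2}) hold on $\text{dom}V$. Fix $x_0\in\overline{\text{dom}A}$; if $x_0\notin\text{dom}V$ there is nothing to prove, so assume $x_0\in\text{dom}V$, and fix a solution $x(\cdot)=x(\cdot;x_0)$. Since $\partial_P V(x)$ and $\partial_{P,\infty}V(x)$ are empty outside $\text{dom}V$, the global hypotheses of the theorem trivially yield the localized hypotheses (\ref{sl.2})--(\ref{sl.2b}) of Proposition \ref{t5.3} around any point. Set
\[
\bar T:=\sup\Big\{t\geq 0 \;\Big|\; e^{a\tau}V(x(\tau))+\int_0^\tau W(x(s))\,ds\leq V(x_0),\ \forall\tau\in[0,t]\Big\}.
\]
Proposition \ref{t5.3} applied at $x_0$ gives $\bar T>0$. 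I will argue $\bar T=+\infty$ by contradiction: if $\bar T<+\infty$, the lower semicontinuity of $V$, the continuity of $x(\cdot)$, and the monotone convergence theorem applied to the nonnegative integrand $W\circ x$ force the Lyapunov inequality to persist at $t=\bar T$; in particular $V(x(\bar T))<+\infty$, so $x(\bar T)\in\text{dom}V\subset\text{dom}A$. Applying Proposition \ref{t5.3} to the shifted trajectory $\tau\mapsto x(\tau+\bar T)$, one obtains the Lyapunov inequality on $[0,\delta]$ for some $\delta>0$, which combines with the bound at $\bar T$ (the multiplicative factor $e^{a\bar T}\geq 1$ appearing in front of the new $W$-integral is absorbed using $W\geq 0$) to extend the global bound to $[0,\bar T+\delta]$, contradicting the definition of $\bar T$.

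\textbf{Necessity.} Given $x\in\text{dom}V$, $v\in F(x)$ and $\xi\in\partial_F V(x)$, Lemma \ref{l.3.1} furnishes a Lipschitz selection $f$ of $F$ with $f(x)=v$; the unique solution $y(\cdot)$ of $\dot y\in f(y)-A(y)$, $y(0)=x$, is also a solution of (\ref{ieq}) and, by Proposition \ref{pro3.2}$(i)$, satisfies $\frac{d^+y(0)}{dt}=v-\Pi_{A(x)}(v)$ with $x^\ast:=\Pi_{A(x)}(v)\in A(x)$. Combining the Lyapunov inequality $e^{at}V(y(t))+\int_0^t W(y(\tau))\,d\tau\leq V(x)$ with the Fréchet subgradient bound $V(y(t))\geq V(x)+\langle\xi,y(t)-x\rangle-\varepsilon\|y(t)-x\|$ (valid for small $t$ and any preassigned $\varepsilon>0$), dividing by $t$ and taking the $\liminf$ as $t\to 0^+$ while invoking the lower semicontinuity of $W$ at $x$ to obtain $\liminf_{t\to 0^+}t^{-1}\int_0^t W(y(\tau))\,d\tau\geq W(x)$, one arrives at $\langle\xi,v-x^\ast\rangle+aV(x)+W(x)\leq\varepsilon\|v-x^\ast\|$; sending $\varepsilon\to 0$ yields (\ref{s1}). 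For (\ref{s2}), let $\xi\in\partial_{P,\infty}V(x)$, so $(\xi,0)\in\mathrm{N}^P_{\text{epi}V}(x,V(x))$. Since the Lyapunov bound gives $V(y(t))\leq V(x)$ for small $t>0$, the point $(y(t),V(x))$ belongs to $\text{epi}V$ near $(x,V(x))$, and the proximal normal inequality reduces to $\langle\xi,y(t)-x\rangle\leq\sigma\|y(t)-x\|^2$; dividing by $t\to 0^+$ leaves $\langle\xi,v-x^\ast\rangle\leq 0$, which is (\ref{s2}).

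\textbf{Main obstacle.} The most delicate step is the continuation in the sufficiency part: at the boundary time $\bar T$, one must simultaneously close the Lyapunov bound using only lower semicontinuity of $V$ (not continuity), verify $x(\bar T)\in\text{dom}V$, and then correctly splice the ``past'' and ``future'' Lyapunov inequalities, carefully managing the exponential factor $e^{a\bar T}$ that multiplies the new $W$-integral. On the necessity side, the subtlety is that $W$ is only lower semicontinuous, which forces a liminf-based limiting argument rather than a clean pointwise differentiation.
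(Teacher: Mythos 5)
Your proof follows essentially the same route as the paper's: sufficiency via Proposition \ref{t5.3} combined with the continuation argument at the supremal time (the paper performs exactly your splicing, with the factor $e^{aT}$ in front of the new $W$-integral absorbed using $W\geq 0$), and necessity by differentiating the Lyapunov inequality at $t=0$ along the solution with $\frac{d^{+}y(0)}{dt}=v-\Pi_{A(x)}(v)$ supplied by Proposition \ref{pro3.2}, the only cosmetic difference being that the paper routes (\ref{s1}) through the contingent derivative bound $\sigma_{\partial_{F}V(x)}(\cdot)\leq V^{\prime}(x;\cdot)$ while you use the Fr\'echet subgradient inequality directly. One small slip in your treatment of (\ref{s2}): the Lyapunov inequality gives $V(y(t))\leq e^{-at}V(x)$, which yields $(y(t),V(x))\in\text{epi}V$ only when $V(x)\geq 0$ or $a=0$; use instead the point $(y(t),e^{-at}V(x))\in\text{epi}V$ as the paper does --- since the second component of the normal vector $(\xi,0)$ vanishes and $(e^{-at}-1)^{2}|V(x)|^{2}=o(t)$, the limit $\langle \xi ,v-\Pi _{A(x)}(v)\rangle \leq 0$ is unaffected.
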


\begin{proof}
To prove\ the sufficiency part, we take\ $x_{0}\in \normalfont{\text{dom$V$}}
$ and a solution $x(\cdot ;x_{0})$ of differential inclusion $(\ref{ieq})$.
By\ Proposition \ref{t5.3} there exists some $T>0$ such that 
\begin{equation}
e^{at}V(x(t;x_{0}))+\int_{0}^{t}W(x(\tau ;x_{0}))d\tau \leq
V(x_{0})\,\,\forall t\in \lbrack 0,T].  \label{ie}
\end{equation}%
It suffices to prove that the following quantity is $+\infty ,$%
\begin{equation*}
T:=\sup \{s\geq 0:(\ref{ie})\text{$\,$holds$\,\forall t\in \lbrack 0,s]$}\}.
\end{equation*}%
Otherwise, if $T$ is finite, then $x(T;x_{0})\in \normalfont{\text{dom$V$}}$
(because $V$ is lsc), and again from Proposition \ref{t5.3} we find $\eta >0$
such that for all $t\in \lbrack 0,\eta ]$, using\ the semi-group property of 
$x(\cdot ;x_{0}),$%
\begin{eqnarray*}
&&e^{a(t+T)}V(x(t+T;x_{0}))+\int_{0}^{t+T}W(x(\tau ;x_{0}))d\tau \\
&\leq &e^{aT}\left( e^{at}V(x(t+T;x_{0}))+\int_{T}^{t+T}W(x(\tau
;x_{0}))d\tau \right) +\int_{0}^{T}W(x(\tau ;x_{0}))d\tau \\
&\leq &e^{aT}V(x(T;x_{0}))+\int_{0}^{T}W(x(\tau ;x_{0}))d\tau \leq V(x_{0}),
\end{eqnarray*}%
and we get the contradiction $T\geq T+\eta $. Hence, $T=+\infty $ and $(\ref%
{ie})\,$holds for all $t\geq 0,$ showing that $(V,W)$ forms a strong
Lyapunov pair for differential inclusion $(\ref{ieq}).$

To prove the necessity of the current conditions, we start by verifying\ $(%
\ref{s1})\,$with $\partial =\partial _{F}$. We\ fix $x_{0}\in %
\normalfont{\text{dom$V$}}$ ($\subset \normalfont{\text{dom$A$}})$ and $v\in
F(x_{0}),$\ and, according to Proposition \ref{pro3.2}, we choose\ a
solution $x(\cdot ;x_{0})$ of differential inclusion (\ref{ieq}) such that\ $%
\frac{d^{+}x(0;x_{0})}{dt}=v-\Pi _{A(x_{0})}(v).$ Thus, since $(V,W)$ is
assumed to be a strong $a$-Lyapunov pair for (\ref{ieq}), we obtain for
every $t>0$ 
\begin{equation*}
\frac{V(x(t;x_{0}))-V(x_{0})}{t}+\frac{e^{at}-1}{t}V(x(t;x_{0}))+\frac{1}{t}%
\int_{0}^{t}W(x(\tau ;x_{0}))d\tau \leq 0,
\end{equation*}%
which give us, as $t\downarrow 0$, 
\begin{eqnarray*}
\sigma _{\partial _{F}V(x_{0})}(v-\Pi _{A(x_{0})}(v)) &\leq &V^{\prime
}(x_{0};v-\Pi _{A(x_{0})}(v)) \\
&\leq &\underset{t\downarrow 0}{\liminf }\frac{V(x(t;x_{0}))-V(x_{0})}{t}%
\leq -aV(x_{0})-W(x_{0}).
\end{eqnarray*}%
Hence, $(\ref{s1})$ follows with either $\partial =\partial _{F}$ or $%
\partial =\partial _{P}.$ To verify $(\ref{s2})$ we fix $x_{0}\in %
\normalfont{\text{dom$V$}},$ $v\in F(x_{0})$ and $\xi \in \partial
_{P,\infty }V(x_{0});$ that is, $(\xi ,0)\in \mathrm{N}_{\text{epi}%
V}^{P}(x_{0},V(x_{0})).$ According to Proposition \ref{pro3.2}, we choose\ a
solution $x(\cdot ;x_{0})$ of differential inclusion (\ref{ieq}) such that\ $%
\frac{d^{+}x(0;x_{0})}{dt}=v-\Pi _{A(x_{0})}(v).$ Since $(V,W)$ is strong $a 
$-Lyapunov for differential inclusion (\ref{ieq}), one has that $%
(x(t;x_{0}),e^{-at}V(x_{0}))\in $epi$V$ for all $t\geq 0.$ Then, by the
definition of the proximal normal cone, there exists $\eta >0$ such that for
all small $t\geq 0$ 
\begin{equation*}
\langle (\xi ,0),(x(t;x_{0}),e^{-at}V(x_{0}))-(x_{0},V(x_{0}))\rangle \leq
\eta \big(\left\Vert x(t;x_{0})-x_{0}\right\Vert
^{2}+|e^{-at}V(x_{0})-V(x_{0})|\big)^{2},
\end{equation*}%
and so 
\begin{equation*}
\langle \xi ,x(t;x_{0})-x_{0}\rangle \leq \eta \big(\left\Vert
x(t;x_{0})-x_{0}\right\Vert ^{2}+(e^{-at}-1)^{2}|V(x_{0})|^{2}\big).
\end{equation*}%
Hence, by dividing on\ $t>0$ and taking limits as $t\downarrow 0$, we obtain
that\ $\langle \xi ,v-\Pi _{A(x_{0})}(v)\rangle \leq 0,$ as we wanted to
prove.
\end{proof}

We give in the following corollary other criteria for strong $a$-Lyapunov
pairs for $(\ref{ieq}).$ Recall that $A^{\circ }$ is said to be locally
bounded on $\normalfont{\text{dom$V$}}$ if condition (\ref{mx0}) holds for
all $x\in \text{dom}V;$ that is, for every $x\in \normalfont{\text{dom$V$}}$
we have 
\begin{equation*}
\mathfrak{m}(x)=\underset{y\rightarrow x,y\in \normalfont{\text{dom$V$}}}{%
\lim \sup }\left\Vert A^{\circ }(y)\right\Vert <+\infty .
\end{equation*}%
We also observe that the function $\mathfrak{m}$ is upper semicontinuous at
every $x\in \mathbb{R}^{n}$ such that $\mathfrak{m}(x)<+\infty ;$ that is, 
\begin{equation}
\underset{y\rightarrow x,y\in \normalfont{\text{dom$V$}}}{\lim \sup }%
\mathfrak{m}(y)=\mathfrak{m}(x).  \label{uscm}
\end{equation}

\begin{corollary}
\label{c5.3}Let $V,W,$ and $a$ be as in Proposition $\ref{t5.3},$ and let $%
\partial $ stand for either $\partial _{P},$ $\partial _{F},$ or $\partial
_{L}.$ If $A^{\circ }$ is locally bounded on $\normalfont{\text{dom$V$}},$
then $(V,W)$ is a strong $a$-Lyapunov pair for $(\ref{ieq})$ iff one of the
following statements holds.

\begin{itemize}
\item[(i)] For any $x\in \normalfont{\text{dom$V$}}$, 
\begin{equation*}
\underset{\xi \in \partial V(x)}{\sup }\,\,\underset{v\in F(x)}{\sup }\,\,%
\underset{x^{\ast }\in A(x)\cap \mathrm{B}_{\left\Vert F(x)\right\Vert +%
\mathfrak{m}(x)}}{\inf }\langle \xi ,v-x^{\ast }\rangle +aV(x)+W(x)\leq 0.
\end{equation*}

\item[(ii)] For any $x\in \normalfont{\text{dom$V$}}$, 
\begin{equation*}
\underset{v\in F(x)}{\sup }V^{\prime }(x;v-\Pi _{A(x)}(v))+aV(x)+W(x)\leq 0.
\end{equation*}

\item[(iii)] For any $x\in \normalfont{\text{dom$V$}}$, 
\begin{equation*}
\underset{v\in F(x)}{\sup }\,\,\underset{x^{\ast }\in A(x)\cap \mathrm{B}%
_{\left\Vert F(x)\right\Vert +\mathfrak{m}(x)}}{\inf }V^{\prime
}(x;v-x^{\ast })+aV(x)+W(x)\leq 0.
\end{equation*}
\end{itemize}
\end{corollary}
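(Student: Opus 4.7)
The plan is to deduce the corollary from Theorem \ref{StrongLyap} by exploiting the local boundedness of $A^{\circ}$ on $\text{dom}V$, which forces
\[
\|\Pi_{A(x)}(v)\| \leq \|v\| + \|A^{\circ}(x)\| \leq \|F(x)\| + \mathfrak{m}(x)
\]
for every $v \in F(x)$ and $x \in \text{dom}V$, so that $\Pi_{A(x)}(v) \in A(x) \cap \mathrm{B}_{\|F(x)\|+\mathfrak{m}(x)}$. I intend to establish the cycle Strong $a$-Lyapunov $\Rightarrow$ (ii) $\Rightarrow$ (iii) $\Rightarrow$ (i) $\Rightarrow$ Strong $a$-Lyapunov, in this order.

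For the first implication, I would mirror the necessity argument from Theorem \ref{StrongLyap}: fix $x_{0} \in \text{dom}V$ and $v \in F(x_{0})$, and invoke Proposition \ref{pro3.2}(i) to obtain a solution with $\frac{d^{+}x(0;x_{0})}{dt} = v - \Pi_{A(x_{0})}(v)$. Dividing the Lyapunov inequality by $t > 0$ and taking $\liminf_{t \downarrow 0}$, using the lsc of $W$, produces
\[
V^{\prime}(x_{0}; v - \Pi_{A(x_{0})}(v)) \leq -aV(x_{0}) - W(x_{0}),
\]
which is (ii). The implication (ii) $\Rightarrow$ (iii) is trivial because $\Pi_{A(x)}(v)$ already lies in $A(x) \cap \mathrm{B}_{\|F(x)\|+\mathfrak{m}(x)}$. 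For (iii) $\Rightarrow$ (i) with $\partial \in \{\partial_{P}, \partial_{F}\}$, inequality (\ref{1.1}) yields $\langle \xi, v - x^{*}\rangle \leq V^{\prime}(x; v - x^{*})$; applying $\inf_{x^{*}}$ then $\sup_{v}$ then $\sup_{\xi}$ delivers (i). The extension to $\partial = \partial_{L}$ is handled by a limit argument: writing $\xi = \lim \xi_{n}$ with $\xi_{n} \in \partial_{P}V(x_{n})$ and using the upper semicontinuity (\ref{uscm}) of $\mathfrak{m}$ together with the closedness of the graph of $A$ to extract a convergent subsequence of the bounded selection $x_{n}^{*} \in A(x_{n}) \cap \mathrm{B}_{\|F(x_{n})\|+\mathfrak{m}(x_{n})}$.

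Finally, for (i) $\Rightarrow$ Strong $a$-Lyapunov, I would apply Theorem \ref{StrongLyap} and verify (\ref{s1}) and (\ref{s2}). Condition (\ref{s1}) is immediate since $A(x) \cap \mathrm{B}_{\|F(x)\|+\mathfrak{m}(x)} \subset A(x)$ implies $\inf_{x^{*} \in A(x)} \langle \xi, v - x^{*}\rangle \leq \inf_{x^{*} \in A(x) \cap \mathrm{B}} \langle \xi, v - x^{*}\rangle \leq -aV(x) - W(x)$. The main obstacle is (\ref{s2}), for which I rely on the elementary fact that if $(\xi, 0) \in \mathrm{N}_{\text{epi}V}^{P}(x, V(x))$ and $h$ is a direction with $V^{\prime}(x;h) < +\infty$, then $\langle \xi, h\rangle \leq 0$; this follows from the proximal-normal inequality applied to the sequence $(x + t_{n}w_{n}, V(x + t_{n}w_{n})) \in \text{epi}V$ with $t_{n} \downarrow 0$, $w_{n} \to h$, using $|V(x+t_{n}w_{n}) - V(x)| = O(t_{n})$. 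Cleanly closing the cycle requires the finiteness of $V^{\prime}(x; v - \Pi_{A(x)}(v))$, which is delivered by (ii) rather than directly by (i); the most elegant way to circumvent this is to bypass Theorem \ref{StrongLyap} altogether and apply Theorem \ref{strong invariant} directly to $\text{epi}\tilde{V}$ in the augmented system (\ref{ieq.transform}) of Lemma \ref{lt}, where the ball restriction (\ref{t.si.405}) and the natural decomposition of $\mathrm{N}_{\text{epi}\tilde{V}}^{P}$ into the regimes $\kappa > 0$ (yielding the $\partial_{P}V$ statement) and $\kappa = 0$ (yielding the horizontal-normal control) emerge in a single stroke.
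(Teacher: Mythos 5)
Your cycle (strong Lyapunov $\Rightarrow$ (ii) $\Rightarrow$ (iii) $\Rightarrow$ (i) $\Rightarrow$ strong Lyapunov) is exactly the paper's, and the first three links are handled as in the paper: the necessity of (ii) via Proposition \ref{pro3.2} and the contingent derivative, the trivial inclusion $\Pi _{A(x)}(v)\in A(x)\cap \mathrm{B}_{\left\Vert F(x)\right\Vert +\mathfrak{m}(x)}$, the relation (\ref{1.1}) for $\partial _{P}$ and $\partial _{F}$, and a limiting argument with the closed graph of $A$ and the upper semicontinuity (\ref{uscm}) of $\mathfrak{m}$ for $\partial _{L}$. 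The genuine gap is in the last link, $(i)\Rightarrow $ strong Lyapunov, specifically in verifying the singular condition (\ref{s2}) of Theorem \ref{StrongLyap}. You correctly diagnose that your ``elementary fact'' requires finiteness of $V^{\prime }(x;v-\Pi _{A(x)}(v))$, which is supplied by (ii) but not by (i) --- and (ii) is not available when you must close the cycle from (i) alone. Your proposed repair, bypassing Theorem \ref{StrongLyap} and applying Theorem \ref{strong invariant} to $\mathrm{epi}\tilde{V}$ in the augmented system, does not fix this: the $\kappa =0$ regime of $\mathrm{N}_{\mathrm{epi}\tilde{V}}^{P}$ consists precisely of the horizontal normals $(\xi ,\theta )$ with $\xi \in \partial _{P,\infty }V(x)$, so condition (\ref{s2}) reappears verbatim there, and you give no mechanism for deducing it from (i).

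The paper closes this link with a limiting argument that your proposal is missing. Given $\xi \in \partial _{P,\infty }V(x)$ and $v\in F(x)$, write $\xi =\lim_{i}\alpha _{i}\xi _{i}$ with $\xi _{i}\in \partial _{P}V(x_{i})$, $x_{i}\rightarrow x$, $V(x_{i})\rightarrow V(x)$, $\alpha _{i}\downarrow 0$, and pick $v_{i}\in F(x_{i})$ with $v_{i}\rightarrow v$ by the Lipschitzianity of $F$. Applying (i) with $\partial =\partial _{P}$ at each $x_{i}$ yields $x_{i}^{\ast }\in A(x_{i})\cap \mathrm{B}_{\left\Vert F(x_{i})\right\Vert +\mathfrak{m}(x_{i})}$ with $\langle \xi _{i},v_{i}-x_{i}^{\ast }\rangle +aV(x_{i})+W(x_{i})\leq 0$; the ball restriction built into (i) --- this is precisely why it is there --- makes $(x_{i}^{\ast })_{i}$ bounded, so a subsequence converges to some $x^{\ast }\in A(x)$ by maximal monotonicity. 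Multiplying the inequality by $\alpha _{i}$ and letting $i\rightarrow \infty $ (using $W\geq 0$ and $\alpha _{i}V(x_{i})\rightarrow 0$) gives $\langle \xi ,v-x^{\ast }\rangle \leq 0$, which is (\ref{s2}); condition (\ref{s1}) then follows, as you note, by enlarging the infimum from $A(x)\cap \mathrm{B}_{\left\Vert F(x)\right\Vert +\mathfrak{m}(x)}$ to $A(x)$. With this step inserted your argument is complete; without it, the implication $(i)\Rightarrow $ strong Lyapunov is not established.
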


\begin{proof}
$(ii)\Rightarrow (iii).$ This implication follows since that\ for any $x\in 
\text{dom}V$ $(\subset $dom$A)$ any $v\in F(x)$\ 
\begin{equation*}
\Vert \Pi _{A(x)}(v)\Vert \leq \left\Vert A^{\circ }(x)\right\Vert
+\left\Vert \Pi _{A(x)}(v)-A^{\circ }(x)\right\Vert \leq \left\Vert A^{\circ
}(x)\right\Vert +\left\Vert v\right\Vert \leq \mathfrak{m}(x)+\left\Vert
F(x)\right\Vert .
\end{equation*}%
$(iii)\Rightarrow (i).$ When $\partial $ stands for either $\partial _{P}$
or $\partial _{F}$ this implication follows from the relation\ $\sigma
_{\partial _{P}V(x)}(\cdot )\leq \sigma _{\partial _{F}V(x)}(\cdot )\leq
V^{\prime }(x;\cdot )$. If $\partial =\partial _{L},$\ we take $\xi \in
\partial _{L}V(x)$ and $v\in F(x),$ and choose\ sequences $(x_{i})$ and $%
(\xi _{i})$ such that 
\begin{equation*}
x_{i}\overset{V}{\rightarrow }x,\,\,\xi _{i}\in \partial
_{P}V(x_{i}),\,\,\xi _{i}\rightarrow \xi \,\,\,\text{as}\,\,i\rightarrow
\infty ;
\end{equation*}%
moreover, due to the upper semi-continuity of $\mathfrak{m}$ at $x$ and $%
\mathfrak{m}(x)<+\infty ,$ by assumption, we may\ assume up to a subsequence
that 
\begin{equation}
\mathfrak{m}(x_{i})\leq \mathfrak{m}(x)+\frac{1}{i}\,\,\forall i\in \mathbb{N%
}.  \label{lb.A}
\end{equation}%
By the Lipschitzianity of $F$ we also choose a sequence $(v_{i})_{i\geq 1}$
such that $v_{i}\in F(x_{i})$ and $v_{i}\rightarrow v.$ Since $(i)$ holds
with $\partial =\partial _{P}$, for each $i$ there exists $x_{i}^{\ast }\in
A(x_{i})\cap \mathrm{B}_{\left\Vert F(x_{i})\right\Vert +\mathfrak{m}%
(x_{i})} $ such that 
\begin{equation}
\langle \xi _{i},v_{i}-x_{i}^{\ast }\rangle +aV(x_{i})+W(x_{i})\leq 0.
\label{c.5.1a}
\end{equation}%
Then, since the\ maximal monotone operator $A$ has a closed graph, and $%
(x_{i}^{\ast })_{i}$ is bounded, we assume w.l.o.g. that 
\begin{equation*}
x_{i}^{\ast }\rightarrow x^{\ast }\in A(x)\cap \mathrm{B}_{\mathfrak{m}%
(x)}\,\,\text{as}\,\,i\rightarrow \infty .
\end{equation*}%
So, by passing to the limit in\ (\ref{c.5.1a}) as $i\rightarrow \infty $,
and using the lower semicontinuity of $W,$ we obtain that 
\begin{equation}
\langle \xi ,v-x^{\ast }\rangle +aV(x)+W(x)\leq 0,  \label{c.5.2}
\end{equation}%
which shows that $(i)$ holds when $\partial =\partial _{L}.$

$(i)\Rightarrow (V,W)$ is a strong $a$-Lyapunov pair for $(\ref{ieq}).$
According to Theorem \ref{StrongLyap} we only need to show that (\ref{s2})
holds. We fix $x\in \text{dom}V,$ $\xi \in \partial _{P,\infty }V(x)$ and $%
v\in F(x).$ There exist sequences $(x_{i})_{i},$ $(\xi _{i})_{i},$ and $%
(\alpha _{i})_{i}$ such that 
\begin{equation*}
x_{i}\overset{V}{\rightarrow }x,\,\,\xi _{i}\in \partial
_{P}V(x_{i}),\,\,\alpha _{i}\downarrow 0,\,\,\alpha _{i}\xi _{i}\rightarrow
\xi \ \text{as}\,\,i\rightarrow \infty .
\end{equation*}%
By arguing as in the last paragraph above there also exists a sequence $%
(v_{i})_{i}$\ such that $v_{i}\in F(x_{i})$ and $v_{i}\rightarrow v\ $as$\
i\rightarrow \infty .$ Moreover, using the current assumption on $A^{\circ
}, $ there exists $m>0$ such that $\sup_{i}\mathfrak{m}(x_{i})\leq m.$ Now,
by assumption $(ii),$ for each $i\in \mathbb{N}$ there exists a sequences $%
x_{i}^{\ast }\in A(x_{i})\cap \mathrm{B}_{\left\Vert F(x_{i})\right\Vert +%
\mathfrak{m}(x_{i})}\subset A(x_{i})\cap \mathrm{B}_{\left\Vert
F(x_{i})\right\Vert +m}$ and 
\begin{equation}
\langle \xi _{i},v_{i}-x_{i}^{\ast }\rangle +aV(x_{i})+W(x_{i})\leq 0.
\label{c.1.1}
\end{equation}%
By using\ again that $A$ has a closed graph, and that $x_{i}^{\ast
}\rightarrow x^{\ast }\in A(x),$ by multiplying the last inequality above\ (%
\ref{c.1.1}) by $\alpha _{i}$ and next taking limits as $i\rightarrow \infty 
$, we arrive at (\ref{s2}). The proof of the corollary is finished since $%
(ii)$ is a necessary condition for\ strong $a$-Lyapunov pairs, as we have
shown in the proof of Theorem \ref{StrongLyap}.
\end{proof}

\end{document}